\def\a{\alpha}
\newcommand{\RP}{\ensuremath{\left[0,+\infty\right[}}
\newcommand{\RM}{\ensuremath{\left]-\infty,0\right]}}
\newcommand{\RPP}{\ensuremath{\left]0,+\infty\right[}}
\newcommand{\RX}{\ensuremath{\left]-\infty,+\infty\right]}}
\newcommand{\at}{{\widetilde \a}}
\newcommand{\reli}{\ensuremath{\text{\rm ri}\,}}
\newcommand{\NN}{{\mathbb N}}
\newcommand{\R}{{\mathbb R}}
\newcommand{\E}{{\mathbb E}}
\newcommand{\bd}{\textbf}
\newcommand{\pinf}{\ensuremath{{+\infty}}}
\newcommand{\dom}{\ensuremath{\operatorname{dom}}}
\newcommand{\inte}{\ensuremath{\operatorname{int}}}
\newcommand{\minimize}[2]{\ensuremath{\underset{\substack{{#1}}}%
{\mathrm{minimize}}\;\;#2 }}
\newcommand{\scal}[2]{{\left\langle{{#1}\mid{#2}}\right\rangle}}
\newcommand{\Menge}[2]{\bigg\{{#1}~\bigg|~{#2}\bigg\}} 
\newcommand{\menge}[2]{\big\{{#1}~\big |~{#2}\big\}}
\newcommand{\supp}{\mathrm{supp}\,}
\newtheorem{algorithm}{\textbf{Algorithm}}
\begin{document}

\title{Bregman proximal gradient method for linear optimization 
under entropic constraints 
}


\author{Luis M. Brice\~{n}o-Arias \and Ma\"{e}l Le~Treust}

\institute{Luis M. Brice\~{n}o Arias \at
             Universidad T\'{e}cnica Federico Santa Mar\'{i}a\\
             Departamento de Matem\'{a}tica, of. A055.\\
             Av. Vicu\~{n}a Mackenna 3939\\
             8940897, San Joaqu\'{i}n, Santiago de Chile\\
             luis.briceno@usm.cl
           \and
              Ma\"{e}l Le~Treust  \at
              Univ. Rennes, CNRS, Inria, IRISA UMR 6074\\
			 263 Av. G\'{e}n\'{e}ral Leclerc,\\
      		 35000 Rennes, France\\
              mael.le-treust@cnrs.fr
}

\date{Received: date / Accepted: date}

\maketitle

\begin{abstract}
In this paper, we present an efficient algorithm for solving a linear 
optimization problem with entropic constraints, a class of 
problems that arises in game theory and information 
theory. Our analysis distinguishes between the cases of active 
and inactive constraints, addressing each using a Bregman 
proximal gradient method with entropic Legendre functions, for 
which we establish {a convergence} rate of $O(1/n)$ in 
objective values. For a specific cost structure, our framework 
provides a theoretical justification for the well-known 
Blahut-Arimoto algorithm {and the uniqueness of the 
Lagrange multiplier associated with the entropic constraint}. In the 
active constraint setting, we 
include a bisection procedure to approximate the 
strictly positive Lagrange multiplier. The efficiency of the proposed 
method is illustrated through comparisons with standard 
optimization solvers on a representative example from game 
theory, including extensions to higher-dimensional settings.
\end{abstract}
\vspace{.3cm}


\keywords{Bregman proximal gradient algorithm \and convex 
optimization \and entropic constraints \and  {convergence rate}}

\subclass{90C25\and {65K05}}



\section{Introduction}\label{sec:Introduction}
In this paper, we propose a numerical approach for solving a 
linear optimization problem with entropic constraints.
This class of problems arises in Information Theory via 
{\emph{capacity of noisy channels}
\cite{Arimoto72,Blahut72,ChiangBoyd_TIT2004,ElGhecheChierchiaPesquet_TIT2018,%
FaustFawziSaunderson_PMLR23,Hayashi_InfoGeometry2024,Hayashi_InfoGeometry25,%
HeSaundersonFawzi_TIT24,MatzDuhamel_ITW2004,NajaAlbergeDuhamel_ICASSP2010,shannon-bell-1948}},
\emph{rate-distortion} 
problems 
{\cite{Blahut72,ChiangBoyd_TIT2004,Csiszar74,ElGhecheChierchiaPesquet_TIT2018,%
Hayashi_TIT23,Hayashi_TIT25,He2024efficient,shannon-bell-1948,Shannon59}},
\emph{coordination} problems 
\cite{CuffPermuterCover10,Cuff(ImplicitCoordination)11,LeTreust(EmpiricalCoordination)17},
{\emph{quantum information bottleneck} problems 
\cite{Hayashi_Quantum2024}, and for computing the 
\emph{reliability function} 
\cite{BenTalTeboulleCharnes_JOTA88,CsiszarKorner(Book)11};
}
{Bregman approaches for \emph{difference of convex functions} 
\cite{FaustFawziSaunderson_PMLR23}; \emph{entropic mathematical 
programming problems} 
\cite{ChandrasekaranShah_MathProg2017,Erlander81};}
Economics via \emph{rational inattention} problems 
\cite{MackowiakMatejkaWiederholt2023,MatejkaMcKay2015,Sims03},
 \emph{Bayesian persuasion and information design} 
problems \cite{DovalSkreta2024,LeTreustTomala19},  and 
\textit{repeated games} 
\cite{GossnerHernandezNeyman06,GossnerLarakiTomala09,%
GossnerTomala06,GossnerTomala07,GossnerVieille02,%
NeymanOkada99,NeymanOkada00}. In particular, 
\cite{GossnerHernandezNeyman06} {is devoted} to repeated 
 games 
with \textit{asymmetric information},
 \cite{NeymanOkada99,NeymanOkada00} study the case with 
 \textit{finite automata 
and bounded recall}, \cite{GossnerVieille02} considers the case 
with \textit{private observation}, and  
\cite{GossnerLarakiTomala09,GossnerTomala06,GossnerTomala07}
with 
\textit{imperfect monitoring}.

{The computation of the information measures introduced 
by Shannon in \cite{shannon-bell-1948,Shannon59} is analyzed in  
\cite{Arimoto72,Blahut72}. More specifically, 
one algorithm computes the capacity of a noisy 
channel stated in \cite[equation (7.1)]{cover-book-2006}, while the other one
calculates the rate-distortion function or, 
equivalently, the distortion-rate function stated in \cite[equations 
(10.11) and 
(10.160)]{cover-book-2006}. In the latter case, the convergence is 
proved in \cite{Csiszar74}. A comprehensive overview of these results 
is provided in \cite[Chap.~8]{CsiszarKorner(Book)11}. The 
Blahut-Arimoto algorithm that computes the channel capacity was 
reformulated in terms of a proximal point method in 
\cite{MatzDuhamel_ITW2004,NajaAlbergeDuhamel_ICASSP2010},
and in terms of a Bregman proximal method in 
\cite{FaustFawziSaunderson_PMLR23,Hayashi_TIT23,%
Hayashi_InfoGeometry2024,Hayashi_TIT25,%
Hayashi_InfoGeometry25,Hayashi_Quantum2024,%
HeSaundersonFawzi_TIT24,He2024efficient}.
The dual of the channel capacity and 
the rate-distortion problems are formulated in terms of geometric 
programs in \cite{ChiangBoyd_TIT2004}.
In \cite{BenTalTeboulleCharnes_JOTA88}, Lagrangian duality is 
applied to reformulate the reliability function, see 
\cite[pp.~152]{CsiszarKorner(Book)11}. 
In \cite{ElGhecheChierchiaPesquet_TIT2018}, closed-form 
expressions for the proximity operator of the Kullback-Leibler 
divergence are derived. On the other hand, efficient algorithms are 
provided from entropic regularizations of linear programs in 
\cite{Erlander81}. In addition, the problem of minimizing 
a linear function subject to linear and entropic constraints is treated in  
\cite{ChandrasekaranShah_MathProg2017}, where
the connection with entropy maximization problems and geometric 
programs is established.
}

%
%
%

{In this work, we focus on a linear optimization problem with the  
original entropic constraint} formulated in 
\cite{GossnerHernandezNeyman06}, {for a repeated game 
scenario}. {Our problem, detailed below as 
Problem~\ref{prob:main}, contains as a particular 
case the distortion-rate problem of \cite[equation 
(10.160)]{cover-book-2006}, for which the solution can be computed 
via the Blahut-Arimoto 
algorithm \cite{Arimoto72,Blahut72,Csiszar74}.} While an explicit 
solution is 
known {when} 
 $|S| = |A| = |B| = 2$ (see 
\cite[Example~2.1]{GossnerHernandezNeyman06}), the general 
case remains unsolved. 
Since the feasible set defined by the entropic constraint is 
nonempty, closed, and convex and the 
objective function is smooth (even 
linear), Problem~\ref{prob:main} can be solved by the projected 
gradient method \cite{levitin1966constrained}. However, 
this algorithm needs to compute efficiently the projection onto 
the convex set, which is not a simple task in general. 

In this work, we develop an efficient algorithm for solving 
Problem~\ref{prob:main}. Our analysis distinguishes two 
scenarios: (i) when the minimal cost is attained within the feasible 
set (inactive constraint), and (ii) when the minimal cost is attained 
outside 
this set (active constraint). In both cases, the proposed algorithms 
are derived as particular instances of a Bregman proximal gradient 
method based on an entropic Legendre function, for which we 
establish {a 
convergence} rate of $O(1/n)$ in objective values. 

In the active constraint case, the method is combined with a 
bisection algorithm to approximate the strictly positive Lagrange 
multiplier{, whose existence and uniqueness is guaranteed,} 
allowing us to solve the associated first-order optimality 
conditions. As a by-product of our analysis, we provide a 
theoretical justification for the widely used algorithm of Blahut and 
Arimoto \cite{Arimoto72,Blahut72} in the special case where the 
cost takes the form $c_{ab}^s = c_b^s \in \mathbb{R}$ for all $(a, 
b, s) \in A \times B \times S$.

{The paper is organized as follows.} In 
Sec.~\ref{sec:preliminaries}, we describe the properties of the 
non-linear constraint, and we investigate the existence of solutions 
to Problem~\ref{prob:main}. In Sec. \ref{sec:InactiveIC}, we 
characterize the cases where the non-linear constraint of 
\eqref{e:defC} is inactive and we determine the optimal solution. In 
Sec. 
\ref{sec:nonattainablecase}, we design the algorithm for the case 
where the non-linear constraint of \eqref{e:defC} is active. In 
Sec.~\ref{sec:numerical}, we 
provide numerical results illustrating the convergence and the 
optimality of the proposed algorithm.

\section{Motivation}\label{sec:motivation}
 For every nonempty finite 
set $I$, we denote by 
$\Delta(I)=\menge{x\in[0,1]^{{I}}}{\sum_{i\in 
I}x_i=1}$ the set of probability distributions over $I$, 
{$\reli(C)=\menge{x\in C}{{\rm cone}(C-x)={\rm 
span}(C-x)}$ stands 
for the relative interior of a set $C\subset \R^I$, where ${\rm 
cone}(C)$ and ${\rm span}(C)$ are the smallest cone (conical 
hull) and linear subspace (span) containing $C$, 
respectively.}
\begin{problem}\label{prob:main}
Let $A$, $B$, and $S$ be nonempty finite sets such that $|A|\ge 
2$, let $(c_{ab}^s)_{a\in A,\, b\in B}^{s\in S}\in \R^{(A\times 
B)^S}$ a vector of cost values, and let $p=(p_s)_{s\in 
S}\in{\reli(\Delta(S))}$ a probability distribution. 
The problem is to
\begin{align}
\label{e:PrimalProblem}
\minimize{q\in C}{\sum_{s\in 
S}p_s\sum_{a\in A}\sum_{b\in B}c_{ab}^sq_{ab}^s},
\end{align}
where 
\begin{equation}
\label{e:defC}
C=\Menge{q\in(\Delta(A\times 
B))^S}{\sum_{s\in 
S}p_s\sum_{a\in A}\sum_{b\in B}q_{ab}^s\ln 
\Bigg(\frac{q_{ab}^s}{\sum\limits_{a'\in A}\sum\limits_{s'\in 
S}p_{s'}q_{a'b}^{s'}}\Bigg)\leq 
0}
\end{equation}
with the convention $0\cdot\ln 0=0$.
\end{problem}

For a game theory interpretation of Problem~\ref{prob:main}, let 
us consider the {two-player} repeated coordination game with 
asymmetric information treated in 
\cite{GossnerHernandezNeyman06}, in which $S$ is the finite set 
of states of nature, $A$ and $B$ are the sets of actions of players 
1 and 2, respectively, and their common stage cost for state $s\in 
S$ and actions $a\in A$ and $b\in B$ is $c_{ab}^s\in\R$. At each 
stage $t\in\mathbb{N}\smallsetminus\{0\}$, $s_t$ is 
drawn according to a fixed i.i.d. probability distribution $(p_s)_{s\in 
S}\in\Delta(S)$ and the sequence of states up to stage 
$t$ is denoted by $s^t=(s_1,\ldots,s_t) \in S^t$. 
Both players observe past actions but, regarding the states of 
nature, player 1 is better informed than player 2. Indeed, at stage 
$t$, the first observes the infinite sequence of states $s^{\infty}$ 
whereas player 2 only observes the past states $s^{t-1}$. Hence, 
the long-run pure strategies of players 1 and 2, denoted by 
$\sigma = (\sigma_t)_{t\in\NN}$ and $\tau = (\tau_t)_{t\in\NN}$, 
are chosen in the set 
\begin{equation}
\label{e:strat}
\Sigma=\Menge{(\sigma,\tau)}{(\forall t\in\NN)\quad 
\begin{aligned}
&\sigma_t : A^{t-1}\times  B^{t-1}\times S^{\infty}  \to A\\[-2pt]
&\tau_t :  A^{t-1}\times  B^{t-1}\times S^{t-1}\to B
\end{aligned}
},
\end{equation}
{where we} use the convention that $A^0=B^0=S^0$ are 
singletons.  At each time 
$t\in\NN$, nature {draws} state $s_t$ according to $(p_s)_{s\in 
S}\in\Delta(S)$, players 1 and 
2 play actions $a_t=\sigma_t(a^{t-1},b^{t-1},s^{\infty})$ and 
$b_t=\tau_t(a^{t-1},b^{t-1},s^{t-1})$ that produce the cost $c_{a_tb_t}^{s_t}$. The players face the 
problem of choosing the pair of strategies $(\sigma,\tau)\in\Sigma$ 
to minimize 
the expected long-run cost function, i.e.,
\begin{align}
(\sigma,\tau)\mapsto \E\bigg[\frac{1}{T} \sum_{t=1}^T c_{a_tb_t}^{s_t}\bigg], 
\label{eq:TimeT}
\end{align}
as $T$ goes to infinity. In \cite[Theorems~1 \& 
2]{GossnerHernandezNeyman06}, the limit behavior when 
$T\to+\infty$ of the optimal 
expected long-run 
cost in \eqref{eq:TimeT}, turns out to be the optimal value of 
Problem~\ref{e:PrimalProblem}.

The above problem has been studied in the context of Information 
Theory in \cite{Cuff(ImplicitCoordination)11}, where the stage cost 
$c_{ab}^s\in\R$ with $(a,b,s)\in A\times B\times S$, may capture 
decoding inaccuracies  measured by a distance between $b$ and 
$s$ and/or the cost of using symbols $a$. The players 1 and 2 are 
called encoder and decoder, and aim at minimizing 
\eqref{eq:TimeT} by choosing strategies 
$(\sigma,\tau)=(\sigma_t,\tau_t)_{t\in\NN}$  in 
\begin{equation}
\label{e:stratC}
\Pi=\Menge{(\sigma,\tau)}{(\forall t\in\NN)\quad 
\begin{aligned}
&\sigma_t : S^{\infty}  \to A\\[-2pt]
&\tau_t :  A^{t-1}\to B
\end{aligned}
}.
\end{equation}
A refinement of the result in 
\cite{GossnerHernandezNeyman06} is obtained in 
\cite{Cuff(ImplicitCoordination)11}
(see also \cite[Theorem~2]{LeTreust(EmpiricalCoordination)17}), in which 
\begin{align}
\lim_{T \rightarrow + \infty} \min_{(\sigma, \tau)\in\Pi} \E\bigg[\frac{1}{T} \sum_{t=1}^T 
c_{a_tb_t}^{s_t}\bigg], 
\label{eq:AsymptoticC}
\end{align} 
is also equal to the value of Problem~\ref{prob:main}. Note that, since $\Pi\subset\Sigma$, 
this result is stronger than the one in \cite{GossnerHernandezNeyman06}.

We now interpret Problem~\ref{prob:main}. For every $(a,b,s)\in 
A\times B\times S$, 
the variable $q_{ab}^s$ represents a target conditional probability of $(a,b)$ given $s$, 
thus the objective function is the expected cost with respect to the joint probability 
distribution $\{p_sq_{ab}^s\}_{a\in A,\,b\in B}^{s\in S}\in 
\Delta(A\times B\times S)$. In 
\cite{GossnerHernandezNeyman06}, the authors prove that the probability distribution 
$\{p_sq_{ab}^s\}_{a\in A,\,b\in B}^{s\in S}$ satisfies the entropic 
constraint in \eqref{e:defC} \emph{if and 
only if} there exists a coding strategy  $(\sigma,\tau)\in\Sigma$ that induces an empirical 
distribution over the sequences $(a_t,b_t,s_t)_{t\in\NN}\in A^{\infty}\times B^{\infty}\times 
S^{\infty}$, which converges to $\{p_sq_{ab}^s\}_{a\in A,\,b\in 
B}^{s\in S}$. We denote by $\bd{a}$, $\bd{b}$, 
$\bd{s}$ the random variables drawn according to  
$\{p_sq_{ab}^s\}_{a\in A,\,b\in B}^{s\in S}$, thus the 
function appearing in \eqref{e:defC} reformulates as 
\begin{align}
\sum_{s\in S}p_s\sum_{a\in A}\sum_{b\in B}q_{ab}^s\ln 
\Bigg(\frac{q_{ab}^s}{\sum\limits_{a'\in A}\sum\limits_{s'\in 
S}p_{s'}q_{a'b}^{s'}}\Bigg)
&= I(\bd{b};\bd{s})-H(\bd{a}|\bd{b},\bd{s}),\label{eq:InformationConstraint0}
\end{align}
where 
\begin{align}
I(\bd{b};\bd{s})= \sum_{s\in S}p_s\sum_{a\in A}\sum_{b\in 
B}q_{ab}^s\ln  
\Bigg(\frac{\sum_{a'\in A}q_{a'b}^{s}}{\sum\limits_{a'\in 
A}\sum\limits_{s'\in 
S}p_{s'}q_{a'b}^{s'}}\Bigg)  
\quad \text{and}\quad 
H(\bd{a}|\bd{b},\bd{s})=  \sum_{s\in S}p_s\sum_{a\in A}\sum_{b\in 
B}q_{ab}^s\ln 
\Bigg(\frac{\sum_{a'\in 
A}q_{a'b}^{s}}{q_{ab}^s}\Bigg)\label{eq:MI_CondEntropy}
\end{align}
are interpreted as the mutual information $I(\bd{b};\bd{s})$, 
which measures the correlation between random variables 
$\bd{b}$ and 
$\bd{s}$, and the conditional entropy $H(\bd{a}|\bd{b},\bd{s})$, 
which 
measures the uncertainty of $\bd{a}$ given $\bd{b}$ and $\bd{s}$ 
(see, e.g., \cite[Chap. 2]{cover-book-2006}). 

{Now assume that, for every $(a,b,s)\in A\times B\times 
S$, $c_{ab}^s=c_b^s\in\R$. Then, for every $q\in (\Delta(A\times 
B))^S$, we have $\sum_{s\in S}p_s\sum_{a\in A}\sum_{b\in 
B}c_{ab}^sq_{ab}^s=\sum_{s\in S}p_s\sum_{b\in 
B}c_{b}^s\widehat{q}_{b}^s$,
where $\widehat{q}_b^s:=\sum_{a\in A} 
q_{ab}^s$. Moreover, \eqref{eq:InformationConstraint0}, 
\eqref{eq:MI_CondEntropy} and $H(\bd{a}|\bd{b},\bd{s}) \leq \ln|A|$ 
(see, e.g.,  \cite[Theorem~2.6.4]{cover-book-2006}) yield
\begin{align}
\sum_{a,b,s}p_s\frac{\widehat{q}_b^s}{|A|}\ln 
\left(\frac{\frac{\widehat{q}_b^s}{|A|}}
{\sum_{a',s'}p_{s'}\frac{\widehat{q}_{b}^{s'}}{|A|}}\right) 
=& \sum_{s\in S}p_s\sum_{b\in B}\widehat{q}_{b}^s\ln 
\Bigg(\frac{\widehat{q}_{b}^s}{\sum\limits_{s'\in S}p_{s'}
\widehat{q}_{b}^{s'}}\Bigg) - \ln|A|  \leq  \sum_{a,b,s}p_sq_{ab}^s\ln 
\left(\frac{q_{ab}^s}{\sum_{a',s'}p_{s'}q_{a'b}^{s'}}\right)\label{eq:},
\end{align}
which implies that, if ${q}$ is an optimal solution of 
Problem~\ref{prob:main}, then $\widehat{q}$ is an optimal solution 
of problem
\begin{align}
\label{e:ITProblem}
\min_{\widehat{q}\in\Xi}{\sum_{s\in 
S}p_s\sum_{b\in B}c_{b}^s\widehat{q}_{b}^s},
\end{align}
where $$\Xi=\Menge{\widehat{q}\in(\Delta(B))^{ 
S}}{\sum_{s\in 
S}p_s\sum_{b\in B}\widehat{q}_{b}^s\ln 
\Bigg(\frac{\widehat{q}_{b}^s}{\sum\limits_{s'\in S}p_{s'}
\widehat{q}_{b}^{s'}}\Bigg)\leq 
\ln|A|}.$$
Conversely, suppose that $\widehat{q}$ is an optimal solution to 
\eqref{e:ITProblem}. Then, it follows from \eqref{eq:} that 
$\frac{\widehat{q}}{|A|}$ is an optimal 
solution to Problem~\ref{prob:main} and both 
problems have the same optimal value.}

{In the literature, the problem in \eqref{e:ITProblem} corresponds 
to the distortion-rate function evaluated in $\ln |A|$, see 
{\cite[Equation (10.160)]{cover-book-2006}}. Since the criterion 
is linear and the set $\Xi$ is non-empty and compact, 
\cite[Theorem~2.12]{Beck2017} ensures that the set of solutions of 
\eqref{e:ITProblem} is nonempty. Note that the Lagrangian 
associated to \eqref{e:ITProblem} is very similar to that of
the rate-distortion problem, see \cite[Equation 
(8.2)]{CsiszarKorner(Book)11}. Indeed, by taking the inverse 
of the Lagrange multiplier associated to constraint $\Xi$, the latter 
coincides with the former and, hence, the first order optimality 
conditions are equivalent.
According to \cite[Lemma~8.5 \& 
Corollary~8.5]{CsiszarKorner(Book)11}, a solution 
$\widehat{Q}(\widehat{\lambda})$ to the Lagrangian of the 
rate-distortion problem
satisfies the fixed-point equation
\begin{align}
\label{e:defQhat}
(\forall b\in B)(\forall s\in S)\quad 
\widehat{Q}(\widehat{\lambda})_b^s
=&\frac{\widehat{T}(\widehat{Q}(\widehat{\lambda}))_b 
e^{-c_{b}^s/{\widehat{\lambda}}
}}{\sum\limits_{b'\in B}\widehat{T}(\widehat{Q}(\widehat{\lambda}))_{b'}
e^{- c_{b'}^s/{\widehat{\lambda}} } },\\
\widehat{T}\colon q&\mapsto \left(\sum_{s\in S} p_s 
q_b^s\right)_{b\in B},
\end{align}
where $\widehat{\lambda}>0$ is the Lagrange multiplier associated 
to the entropic constraint, when it is active. 

Blahut and Arimoto in 
\cite{Arimoto72,Blahut72} provide an algorithm for solving the 
Lagrangian dual objective function of \eqref{e:ITProblem}, whose 
convergence is proved in \cite[Theorem~1]{Csiszar74}. Given a 
starting probability 
$(t_{b,0})_{b\in B}\in\Delta(B)$, the algorithm iterates 
\begin{align}
&\text{For }n=0,1,2,...\nonumber\\
&\left\lfloor 
\begin{array}{lll}
&\text{For }b\in B \\
&\left\lfloor 
\begin{array}{ll}
&\text{For }s\in S \\
&\left\lfloor 
\begin{array}{ll}
&q_{b,n}^s = \frac{t_{b,n} e^{-c_{b}^s/\widehat{\lambda} 
}}{\sum_{b'\in B}t_{b'\!,n} 
e^{- c_{b'}^s/\widehat{\lambda} } }\\[3pt]
\end{array}
\right.\\[5pt]
&t_{b,n+1} = \sum_{s\in S} p_s q_{b,n}^s.
\end{array}
\right.\\ 
\end{array}
\right. 
\label{eq:IterationBlahutArimoto1}
\end{align}
This method, also described in \cite[Theorem~8.6]{CsiszarKorner(Book)11}, belongs to a famous class of the Blahut-Arimoto algorithms, which allow to compute the rate-distortion function, see \cite[equation (10.11)]{cover-book-2006}, as well as the capacity of noisy channels, see \cite[equation (7.1)]{cover-book-2006}. 
Note that, in order to solve \eqref{e:ITProblem}, {the previous} 
algorithm needs the Lagrange multiplier $\widehat{\lambda}$, which is 
not known a priori. In order to estimate $\widehat{\lambda}$, we 
usually implement  one-dimensional root finding methods, such as 
the bisection algorithm. This means that the 
Blahut-Arimoto algorithm in \eqref{eq:IterationBlahutArimoto1} has 
to be implemented for several values of $\lambda$.}

\section{Preliminaries and existence of solutions}\label{sec:preliminaries}
For every finite set $I$ and $q\in \R^I$, we define 
$\supp(q)=\menge{i\in I}{q_i\ne 0}$, for every nonempty closed 
convex 
set $X\subset \R^{I}$, let
\begin{equation}
\iota_{X}\colon q\mapsto 
\begin{cases}
0,\quad &\text{ if }q\in X;\\
\pinf,&\text{ otherwise,}
\end{cases}
\end{equation}
and define 
\begin{equation}
\label{e:Kullback}
D_I\colon \R^{I}\times\R^{I}\to\RX\colon (q,u)\mapsto 
\sum\limits_{i\in I}
\psi(q_i,u_i),
\end{equation}
where
$\psi\colon\R^2\to\RX$ is 
\begin{equation}
\label{e:defpsi}
\psi\colon (\xi,\mu)\mapsto 
\begin{cases}
\xi\ln(\xi/\mu),\quad &\text{if}\:\: 
\xi> 0\:\:\text{and}\:\:\mu>0;\\
0,&\text{if}\:\:\xi=0\:\:\text{and}\:\:\mu\ge 0;\\
\pinf,&\text{otherwise}.
\end{cases}
\end{equation}
Note that $D_I|_{\Delta(I)\times \Delta(I)}$ is the 
Kullback-Leibler 
divergence 
\cite[Section~2.3]{cover-book-2006}, which turns out to be a 
Bregman distance \cite[Example~9.10]{Beck2017}. More precisely,
{
\begin{equation}
\label{e:Bdist}
(\forall q\in \R^I)(\forall u\in\RPP^I)\quad 
D_I(q,u)=\varphi_I(q)-\varphi_I(u)-\scal{\nabla\varphi_I(u)}{q-u},
\end{equation}
where
\begin{equation}
\label{e:defvarphi}
\varphi_I\colon \R^I\to \RX\colon q\mapsto \sum_{i\in I}\phi(q_i)
\end{equation}
and 
\begin{equation}
\label{e:defphi}
\phi\colon \xi\mapsto
\begin{cases}
\xi\ln\xi,\quad&\text{if}\:\: \xi> 0;\\
0,&\text{if}\:\: \xi=0;\\
\pinf,&\text{otherwise}
\end{cases}
\end{equation}
is a convex lower semicontinuous proper function
\cite[Example~9.35]{BauschkeCombettes17}. 
It follows from \cite[Example~2.1]{Teboulle18} that $\varphi_I$ is a 
Legendre function, i.e., is 
proper, lower semicontinuous, strictly convex, and essentially 
smooth. The last property is satisfied by functions which are
 differentiable in 
the interior of its domain (assumed nonempty) and the norm of its 
gradient evaluated in sequences converging to the boundary 
of its domain goes to infinity (see, e.g., 
\cite[Definition~2.1]{Teboulle18} for more details). 
}

We state now some 
important 
properties of $\psi$ and $D_I$. The convexity of $D_I$ can be 
found in 
\cite[Theorem 2.7.2]{cover-book-2006}, {while a lower 
bound involving $\|\cdot\|_1$ is in 
\cite[Lemma~11.6.1]{cover-book-2006} and can be deduced from 
\cite[Proposition~5.1 \& Remark~5.1]{BeckTeb03}. 
For the sake of completeness, we present below a direct proof of 
convexity and draw on ideas from the proof of 
\cite[Proposition~5.1]{BeckTeb03} to obtain the lower bound.
}
\begin{proposition}
\label{p:KLprop}
Let $I$ be a finite set, let $\psi$ be the function defined 
in \eqref{e:defpsi}, and let $D_I$ the 
function defined in \eqref{e:Kullback}. Then the following 
assertions 
hold:
\begin{enumerate}
\item 
\label{p:KLpropi}
$\psi$ is proper, lower semicontinuous, and convex.
\item 
\label{p:KLpropii}
$D_I$ is proper, lower semicontinuous, convex, and satisfies
\begin{equation}
\label{e:fortmon}
(\forall q\in\Delta(I))(\forall u\in\Delta(I))\quad 
D_I(q,u)\ge\frac{1}{2}\|q-u\|^2_1.
\end{equation}
\end{enumerate}
\end{proposition}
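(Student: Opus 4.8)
Here is a plan for proving Proposition~\ref{p:KLprop}.

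The plan is to prove the two items essentially independently. For~\ref{p:KLpropi} I would treat properness and lower semicontinuity by direct inspection of the boundary of $\dom\psi$, and obtain convexity from the log-sum inequality (equivalently, from $\ln t\le t-1$). For~\ref{p:KLpropii} the first three properties follow at once because $D_I$ is a finite sum of compositions of $\psi$ with coordinate maps; the estimate~\eqref{e:fortmon} is Pinsker's inequality with the sharp constant $\tfrac12$, which I would prove by first treating the binary case with a one-variable calculus argument and then reducing the general case to it through a two-set partition of $I$.

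\emph{Item~\ref{p:KLpropi}.} Properness is immediate: $\psi(1,1)=0$, and $\psi$ never equals $-\infty$ since $\xi\ln(\xi/\mu)\in\R$ for $\xi,\mu>0$. For lower semicontinuity, $\psi$ is continuous on the open orthant $\{\xi>0,\ \mu>0\}$, so it suffices to verify $\liminf\psi\ge\psi$ at the remaining points. Writing $\xi\ln(\xi/\mu)=\mu\,\phi(\xi/\mu)$ with $\phi\colon t\mapsto t\ln t$ --- which satisfies $\phi\ge-1/e$, $\phi(0^+)=0$ and $\phi>0$ on $]1,+\infty[$ --- one checks that $\liminf\psi\ge0=\psi(0,\mu_0)$ at every $(0,\mu_0)$ with $\mu_0\ge0$; at every other point with $\psi=\pinf$ either $\psi\equiv\pinf$ on a neighborhood, or $(\xi_0,\mu_0)=(\xi_0,0)$ with $\xi_0>0$ and then $\xi\ln(\xi/\mu)\to\pinf$ as $\mu\to0^+$. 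For convexity, fix $\lambda\in\,]0,1[$ and two points at which $\psi$ is finite (otherwise the inequality is trivial). When all four numbers $\xi_1,\xi_2,\mu_1,\mu_2$ are positive, the log-sum inequality applied to the pairs $(\lambda\xi_1,\lambda\mu_1)$ and $((1-\lambda)\xi_2,(1-\lambda)\mu_2)$ gives precisely $\lambda\,\xi_1\ln(\xi_1/\mu_1)+(1-\lambda)\,\xi_2\ln(\xi_2/\mu_2)\ge\big(\lambda\xi_1+(1-\lambda)\xi_2\big)\ln\frac{\lambda\xi_1+(1-\lambda)\xi_2}{\lambda\mu_1+(1-\lambda)\mu_2}$, which is the convexity inequality for $\psi$; if instead, say, $\xi_1=0$ and $\mu_1\ge0$, the convex combination equals $\big((1-\lambda)\xi_2,\ \lambda\mu_1+(1-\lambda)\mu_2\big)$ and the inequality reduces to $\lambda\mu_1\ge0$ when $\xi_2>0$ and to $0\le0$ when $\xi_2=0$. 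An alternative for the interior case is to check that the Hessian of $(\xi,\mu)\mapsto\xi\ln(\xi/\mu)$ on the open orthant is positive semidefinite and that $\psi$ is its lower semicontinuous hull.

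\emph{Item~\ref{p:KLpropii}.} Each map $(q,u)\mapsto\psi(q_i,u_i)$ is $\psi$ precomposed with a linear coordinate projection, hence proper (it vanishes whenever $q=u$ has positive entries), lower semicontinuous and convex by~\ref{p:KLpropi}; therefore the finite sum $D_I$ is proper, lower semicontinuous and convex. For~\eqref{e:fortmon}, first take $|I|=2$: for $p\in[0,1]$ the function $g(r)=\psi(p,r)+\psi(1-p,1-r)-2(p-r)^2$ has $g(p)=0$ and, on $]0,1[$, $g'(r)=(p-r)\big(4-\tfrac{1}{r(1-r)}\big)$, which is $\le0$ for $r<p$ and $\ge0$ for $r>p$ because $r(1-r)\le\tfrac14$; together with $g(0),g(1)\in[0,+\infty]$ this yields $g\ge0$, i.e. the binary case. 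For general finite $I$ and $q,u\in\Delta(I)$, set $E=\menge{i\in I}{q_i\ge u_i}$ and $\bar q=(q(E),1-q(E))$, $\bar u=(u(E),1-u(E))$; the log-sum inequality gives $D_I(q,u)\ge\psi(q(E),u(E))+\psi(1-q(E),1-u(E))$, while $\|q-u\|_1=2\big(q(E)-u(E)\big)=\|\bar q-\bar u\|_1$, so the binary case gives $D_I(q,u)\ge\tfrac12\|q-u\|_1^2$.

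\emph{Main obstacle.} The conceptual content reduces to two classical facts --- the log-sum inequality (which drives both the convexity of $\psi$ and the partition step) and the one-dimensional estimate behind binary Pinsker --- so I expect the only genuine care to lie in bookkeeping: the degenerate and boundary configurations of $\dom\psi$ and the convention $0\cdot\ln0=0$ must be tracked consistently through the convexity, lower-semicontinuity, and reduction arguments.
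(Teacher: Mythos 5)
Your proof is correct, but it takes a genuinely different route from the paper on both items. For item~\ref{p:KLpropi}, the paper identifies $\psi$ as the perspective function of $\phi\colon\xi\mapsto\xi\ln\xi$ and cites general results on perspective functions for convexity and lower semicontinuity, whereas you argue directly: a boundary-by-boundary $\liminf$ check for lower semicontinuity and the log-sum inequality for convexity. Your version is self-contained but requires the careful case bookkeeping you flag; the paper's is shorter at the cost of invoking cited machinery. For the inequality \eqref{e:fortmon}, the divergence is more substantive: the paper reduces to $\supp q\subset\supp u$, writes $D_I(q,u)$ as the Bregman divergence of the negative entropy $\varphi$, and obtains the bound by integrating the strong monotonicity estimate $\scal{\nabla\varphi(x)-\nabla\varphi(y)}{x-y}\ge\|x-y\|_1^2$ (Beck--Teboulle) along the segment from $u$ to $q$, passing to the limit $s\to1$ to reach the boundary of the simplex. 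You instead give the classical proof of Pinsker's inequality with sharp constant: the binary case by one-variable calculus (your computation $g'(r)=(p-r)\bigl(4-\tfrac{1}{r(1-r)}\bigr)$ and the resulting monotonicity argument are correct, and $\|q-u\|_1=2|p-r|$ makes $2(p-r)^2$ the right comparator), followed by the reduction to two cells via the partition $E=\menge{i}{q_i\ge u_i}$ and the log-sum inequality. Your route avoids the limiting argument and the external strong-convexity lemma, handling the boundary of the simplex directly; the paper's route is closer to the Bregman-distance viewpoint used throughout the rest of the article. Both are complete proofs of the stated claims.
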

\begin{proof}
\ref{p:KLpropi}: 
Noting that $\psi$ is the {lower semicontinuous convex 
envelope of 
the perspective 
associated to $\phi$ defined in \eqref{e:defphi},
the convexity and lower semicontinuity follow from 
\cite[Proposition~9.42]{BauschkeCombettes17}. }

\ref{p:KLpropii}: The convexity and lower semicontinuity of $D_I$
follow from \ref{p:KLpropi} in view of \eqref{e:Kullback} and 
\cite[Proposition~8.6]{BauschkeCombettes17}. Let $q$ and $u$ in 
$\Delta(I)$.
Note that, if $\supp q\not\subset \supp u$, there exists $i_0\in I$
such that $q_{i_0}>0$ and $u_{i_0}=0$, which implies from 
\eqref{e:Kullback} and \eqref{e:defpsi} that $D_I(q,u)=+\infty$ and, 
thus, the inequality holds trivially. Hence, let us assume that 
$I_q=\supp 
q\subset \supp u=I_u$.
Since $u\in\reli\Delta(I_u)$, there exists $\epsilon>0$
such that, for every $t\in [-\epsilon,1[$, 
$u+t(q-u)\in\reli\Delta(I_u)$.
Let $\varphi\colon\Delta(I_u)\to\R\colon z\mapsto 
\sum_{i\in I_u}\phi(z_i)$, where $\phi$ is defined in 
\eqref{e:defphi}, 
and set
$\theta\colon[-\epsilon,1]\to\R\colon t\mapsto 
\varphi(u+t(q-u))$. 
Note that 
$\theta$ is continuous in $[-\epsilon,1]$, continuously differentiable 
in 
$]-\epsilon,1[$,
and $\theta'\colon t\mapsto \scal{\nabla\varphi(u+t(q-u))}{q-u}$.
Then, for every $t\in]-\epsilon,1[$ we have $u+t(q-u)$ and $u$ are 
in 
$\reli\Delta(I_u)$ and \cite[Proposition~5.1(a)]{BeckTeb03} 
implies
\begin{align}
\theta'(t)-\theta'(0)&=\scal{\nabla\varphi(u+t(q-u))
-\nabla\varphi(u)}{q-u}\nonumber\\
&=
\frac{1}{t}\scal{\nabla\varphi(u+t(q-u))-\nabla\varphi(u)}{u+t(q-u)-u}
\nonumber\\
&\ge 
\frac{1}{t}\|t(q-u)\|_1^2\nonumber\\
&=t\|q-u\|_1^2.
\end{align}
Therefore,
\begin{equation}
(\forall s\in\left]0,1\right[)\quad 
\theta(s)-\theta(0)=\int_0^s\theta'(t)dt\ge 
s\theta'(0)+\frac{s^2}{2}\|q-u\|_1^2,
\end{equation}
and taking $s\to 1$ we obtain from $(q,u)\in\Delta(I_u)^2$ that
\begin{align}
\frac{1}{2}\|q-u\|_1^2&\le 
\varphi(q)-\varphi(u)-\scal{\nabla\varphi(u)}{q-u}\nonumber\\ 
&=\sum_{i\in I_u}\phi(q_i)-\sum_{i\in 
I_u}(u_i\ln(u_i)+(\ln(u_i)+1)(q_i-u_i))
\nonumber\\
&=\sum_{i\in I_q}q_i\ln(q_i)-\sum_{i\in I_u}q_i\ln(u_i)\nonumber\\
&=\sum_{i\in I_q}q_i\ln\left(\frac{q_i}{u_i}\right)\nonumber\\
&=D_I(q,u),
\end{align}
and the proof is complete.\qed
\end{proof}

\subsection{Existence of solutions and properties of 
Problem~\ref{prob:main}}
Define 
\begin{equation}
\label{e:defg}
g\colon\R^{(A\times B)^S}\to \RX\colon q \mapsto 
\begin{cases}
\sum\limits_{s\in S}p_s\sum\limits_{a\in A}\sum\limits_{b\in 
B}\psi\bigg(q_{ab}^s,\sum\limits_{a'\in A}\sum\limits_{s'\in 
S}p_{s'}q_{a'b}^{s'}\bigg),&\text{if}\:\:
 q\in 
[0,+\infty[^{(A\times B)^S};\\
\pinf,&\text{otherwise}.
\end{cases}
\end{equation}

The following result provides the existence of solutions to 
Problem~\ref{prob:main}.
\begin{proposition}
\label{p:exist}
In the context of Problem~\ref{prob:main}, 
let $g$ be the function defined in \eqref{e:defg} and let 
$C$ be the set defined in \eqref{e:defC}. Then, the following hold.
\begin{enumerate}
\item 
\label{p:existi}
There exists $\widetilde{q}\in{\reli((\Delta(A\times 
B))^S)}$ such that
$g(\widetilde{q})<0$.

\item
\label{p:existii+}
$g$ is convex and lower semicontinuous.
\item 
\label{p:existii}
$C$ is a nonempty convex compact set.
\item 
\label{p:existiii}
Problem~\ref{prob:main} admits at least a solution.
\end{enumerate}
\end{proposition}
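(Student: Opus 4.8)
The plan is to prove the four assertions in order, since each one feeds the next.

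\textbf{Assertion \ref{p:existi} (strict feasibility).} The natural candidate is the uniform distribution $\widetilde q_{ab}^s = 1/(|A|\,|B|)$ for all $(a,b,s)$. Then the denominator inside the logarithm is $\sum_{a',s'}p_{s'}\widetilde q_{a'b}^{s'} = |A|\cdot(1/(|A||B|)) = 1/|B|$, while the numerator is $1/(|A||B|)$, so each logarithm equals $\ln(1/|A|) = -\ln|A| < 0$ (recall $|A|\ge 2$). Hence $g(\widetilde q) = \sum_s p_s \sum_{a,b}\widetilde q_{ab}^s\,(-\ln|A|) = -\ln|A| < 0$, and clearly $\widetilde q \in \inte((\Delta(A\times B))^S)$.

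\textbf{Assertions \ref{p:existii+} and \ref{p:existii} (convexity, lsc, compactness of $C$).} For convexity and lower semicontinuity of $g$, I would write $g$ as a sum over $s$ and $b$ of terms of the form $q \mapsto \sum_{a\in A}\psi\big(q_{ab}^s,\,\ell_b(q)\big)$, where $\ell_b(q) = \sum_{a',s'}p_{s'}q_{a'b}^{s'}$ is linear in $q$. Since $(\xi,\mu)\mapsto\psi(\xi,\mu)$ is proper, lsc, and jointly convex by Proposition~\ref{p:KLprop}\ref{p:KLpropi}, and since $(q_{ab}^s, \ell_b(q))$ is a linear map of $q$, each summand is convex and lsc as the composition of a convex lsc function with a linear map; summing preserves both properties. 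The indicator-type restriction to $[0,+\infty[^{(A\times B)^S}$ is already encoded in $\psi$'s domain. For \ref{p:existii}: $(\Delta(A\times B))^S$ is nonempty, convex, and compact (a finite product of simplices); $C$ is its intersection with the sublevel set $\{g\le 0\}$, which is convex (sublevel set of a convex function) and closed (since $g$ is lsc), so $C$ is convex and compact; it is nonempty because $\widetilde q\in C$ by \ref{p:existi}.

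\textbf{Assertion \ref{p:existiii} (existence of a minimizer).} The objective $q\mapsto\sum_s p_s\sum_{a,b}c_{ab}^s q_{ab}^s$ is linear, hence continuous, and $C$ is nonempty and compact by \ref{p:existii}; Weierstrass' theorem then gives a minimizer. I do not expect any real obstacle here — the only point requiring slight care is the joint-convexity/composition argument for $g$, making sure the "inner" argument of $\psi$ is genuinely a \emph{linear} (not merely affine in a problematic way) function of $q$ so that Proposition~\ref{p:KLprop}\ref{p:KLpropi} applies cleanly; everything else is routine.
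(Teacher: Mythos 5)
Your proposal is correct and follows essentially the same route as the paper: the uniform distribution gives $g(\widetilde q)=-\ln|A|<0$, convexity and lower semicontinuity of $g$ follow from the joint convexity of the perspective-type function $\psi$ composed with the linear map $q\mapsto(q_{ab}^s,\sum_{a',s'}p_{s'}q_{a'b}^{s'})$, $C$ is the intersection of the compact simplex product with the closed convex sublevel set $\{g\le 0\}$, and existence of a minimizer follows from compactness (the paper invokes \cite[Proposition~11.15(ii)]{BauschkeCombettes17} where you use Weierstrass, which is equivalent here). No gaps.
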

\begin{proof}
\ref{p:existi}:
Define $\widetilde{q}\in(\Delta(A\times B))^S$ by
\begin{equation}
(\forall s\in S)(\forall a\in A)(\forall b\in B)\quad 
\widetilde{q}_{ab}^s=\dfrac{1}{|A||B|}{>0}.
\end{equation}
It follows from \eqref{e:defg} that $g(\widetilde{q})=-\ln|A|<0$.

\ref{p:existii+}: Note that $(u_{as})_{a,s}\mapsto 
\sum_{a',s'}p_{s'}u_{a's'}$ is linear and, since $\psi$ is convex and 
lower semicontinuous in view of 
Proposition~\ref{p:KLprop}\eqref{p:KLpropi}, $g$ is convex and 
lower semicontinuous. 

\ref{p:existii}: It follows from \eqref{e:defC}, \eqref{e:defg}, and 
\eqref{e:defpsi} that 
\begin{equation}
\label{e:C2}
C=g^{-1}(\RM)\cap(\Delta(A\times B))^S.
\end{equation}
Since \ref{p:existii+} asserts that $g$ is
convex and lower semicontinuous, $g^{-1}(\RM)$ is convex and 
closed.
Moreover, since $(\Delta(A\times B))^S$ is convex and compact,
we deduce from \eqref{e:C2} that
$C$ is convex and compact. The result follows from \ref{p:existi}.

\ref{p:existiii}: It is a direct consequence of 
\cite[Theorem~2.12]{Beck2017}.\qed
\end{proof}

Note that we can write
\begin{equation}
\label{e:defg?}
g\colon q \mapsto 
\sum\limits_{s\in S}p_sD_{A\times B}(q^s,\Lambda(q)),
\end{equation}
where $\Lambda\colon \R^{(A\times B)^S}\mapsto \R^{A\times 
B}$ is 
defined 
by
$$(\forall q\in \R^{(A\times B)^S})(\forall (a,b)\in A\times B)\quad 
(\Lambda(q))_{ab}=\sum_{a'\in 
A}\sum_{s'\in S}p_{s'}q_{a'b}^{s'}.$$
Hence, the constraint in Problem~\ref{prob:main} can be 
interpreted as an averaged Kullback-Leibler divergence constraint.
The following result 
provides the convergence of a fixed point procedure to solve 
convex 
optimization problems involving Kullback-Leibler divergences and 
it is interesting in its own right.
It {is a consequence of the convergence of} the 
Bregman proximal 
gradient method in \cite{BauMOR,Teboulle18} and we deduce a
{convergence rate} of $O(1/n)$ in values.

\begin{theorem}
\label{t:convgen}
For every $s\in S$, let $\Xi_s\subset A\times B$ {and, 
for every $(a,b)\in\Xi_s$, let $d_{ab}^s\in\R$.
Moreover, let $\eta\ge 0$,} denote by 
$\Xi=\prod_{s\in S}\Xi_s$ and by $\Delta_\Xi=\prod_{s\in 
S}\Delta(\Xi_s)$, and consider the problem
\begin{equation}
\label{e:probXi}\tag{$P_{\Xi}$}
\min_{q\in\Delta_{\Xi}}F(q,\eta):={g_{\Xi}(q)}
+\eta\sum_{s\in S}
\sum_{(a,b)\in \Xi_s}d_{ab}^sq_{ab}^s,
\end{equation}
where {$g_{\Xi}$ is the restriction of $g$ 
defined in 
\eqref{e:defg} to $\RP^\Xi$.} Then 
the following hold:
\begin{enumerate}
\item\label{t:convgeni} {The set of solutions to 
\eqref{e:probXi} is 
nonempty.
\item\label{t:convgenii}  Given $q^0\in {\rm 
ri}(\Delta_\Xi)$, 
consider the sequence generated 
by the recurrence 
\begin{equation}
\label{e:recursXi}
(\forall n\in\NN)(\forall s\in S)(\forall (a,b)\in \Xi_s)\quad 
q_{ab}^{n+1,s}=
\frac{(T_\Xi(q^n))_be^{-\eta 
d_{ab}^s/p_s}}{\sum\limits_{(a',b')\in\Xi_s}
(T_\Xi(q^n))_{b'}e^{-\eta 
d_{a'b'}^s/p_s}}, 
\end{equation}
where $T_{\Xi}\colon\Delta_\Xi\to 
\Delta(B)$ is defined by 
\begin{equation}
\label{e:TXi}
T{_\Xi}\colon 
q\mapsto \left(\sum_{a\in A}\sum_{s\in S}p_sq_{ab}^s\right)_{b\in 
B}.
\end{equation}
Then, the following hold.
\begin{enumerate}
\item \label{t:convgeniia}For every solution $\overline{q}$ to 
\eqref{e:probXi}, 
\begin{equation}
\label{e:o1k}
(\forall n\in\NN)\quad 0\le 
n(F(q^{n},\eta)
-F(\overline{q},\eta))+\sum_{k=0}^{n-1}
D_B(T_{\Xi}(q^{k+1}),T_{\Xi}(q^{k}))\le 
\sum_{s\in S}p_sD_{A\times B}(\overline{q}^s,q^{0,s}).
\end{equation}
\item\label{t:convgeniib} For every solution $\overline{q}$ to 
\eqref{e:probXi}, 
$$0\le F(q^{n},\eta)
-F(\overline{q},\eta)\le \frac{1}{n}\sum_{s\in S}p_sD_{A\times 
B}(\overline{q}^s,q^{0,s})$$ 
and 
$\sum_{k=1}^{\infty}D_B(T_{\Xi}(q^{k+1}),T_{\Xi}(q^{k}))<+\infty$.

\item \label{t:convgeniic}
The sequence $(q_n)_{n\in\NN}$ converges to some solution 
$\overline{q}$ to 
\eqref{e:probXi}. 
\end{enumerate}
}

\end{enumerate}

\end{theorem}
\begin{proof}
\ref{t:convgeni}:
Note that, since $\Delta_\Xi$ is compact, the existence 
of solutions to \eqref{e:probXi} is guaranteed by 
{\cite[Theorem~2.12]{Beck2017}} and 
Proposition~\ref{p:exist}\eqref{p:existii+}.

\ref{t:convgeniia}: Note that \eqref{e:probXi} can be written 
equivalently as 
\begin{equation}
\min_{q\in [0,+\infty[^{\Xi}}f(q)+g_{\Xi}(q),
\end{equation}
where 
{\begin{equation}
\begin{cases}
f\colon \R^{\Xi}\to\left]-\infty,+\infty\right]\colon q\mapsto 
\iota_{V_\Xi}(q)+\eta
\sum_{s\in S}\sum_{(a,b)\in \Xi_s}d_{ab}^sq_{ab}^s\\
V_{\Xi}=\menge{q\in \R^{\Xi}}{(\forall s\in 
S)\:\:\sum_{(a,b)\in\Xi_s}q_{ab}^s=1}.
\end{cases}
\end{equation}
Note that $g_{\Xi}=g\circ E$, where}
$E\colon \R^{\Xi}\to\R^{(A\times B)^S}$ stands for the 
canonical extension defined by
\begin{equation}
\label{e:defE}
(\forall q\in \R^{\Xi})(\forall (a,b,s)\in A\times B\times S)\quad 
(Eq)_{ab}^s=
\begin{cases}
q_{ab}^s,&\text{if}\:\: (a,b)\in \Xi_s;\\
0,&\text{otherwise}.
\end{cases}
\end{equation}
Note that $f$ and $g_{\Xi}$ are proper, lower semicontinuous, and 
convex.
Moreover, by defining 
\begin{equation}
\label{e:defh}
h\colon \R^{\Xi}\to\left]-\infty,+\infty\right]\colon q\mapsto 
\sum_{s\in 
S}p_s\varphi_{\Xi_s}(q^s),
\end{equation}
where {$\varphi_{\Xi_s}$
is defined in \eqref{e:defvarphi}}, it follows from 
\cite[Example~1]{BauMOR} that $h$ is a Legendre function 
satisfying $\overline{\rm dom }\,h=[0,+\infty[^{\Xi}={\rm dom}\, h= 
{\rm dom}\, g_{\Xi}$, 
and by defining $\widetilde{q}\in\R^{\Xi}$ by
$$(\forall s\in S)(\forall (a,b)\in\Xi_s)\quad 
\widetilde{q}_{ab}^s=\frac{1}{|\Xi_s|},$$ we have that
$\widetilde{q}\in\dom f\cap \inte\dom h$.
Moreover, $h-g_{\Xi}$ is convex on 
$\inte\dom h=\left]0,+\infty\right[^{\Xi}$. 
Indeed, 
for every $q\in \left]0,+\infty\right[^{\Xi}$, 
it follows from \eqref{e:defg}, \eqref{e:defpsi}, and \eqref{e:defE} 
that
\begin{align}
\label{e:hminusgconv}
(h-g_{\Xi})(q)&=
\sum_{s\in S}p_s\left(\sum_{(a,b)\in \Xi^s}q_{ab}^s\ln 
q_{ab}^s-\sum_{a\in A}\sum_{b\in 
B}\psi\left((Eq)_{ab}^s,\sum_{a'\in A}\sum_{s'\in 
S}p_{s'}(Eq)_{a'b}^{s'}\right)
\right)\nonumber\\
&=
\sum_{s\in S}p_s\sum_{(a,b)\in \Xi^s}q_{ab}^s\left(\ln 
q_{ab}^s-\ln\left(\frac{q_{ab}^s}{{\sum\limits_{a'\in 
A}\sum\limits_{s'\in 
S}}p_{s'}(Eq)_{a'b}^{s'}}\right)\right)\nonumber\\
&=
\sum_{s\in S}p_s\sum_{(a,b)\in \Xi^s}q_{ab}^s\ln\left({\sum_{a'\in 
A}\sum_{s'\in 
S}p_{s'}(Eq)_{a'b}^{s'}}\right)\nonumber\\
&=
\sum_{b\in 
B}\sum_{a\in A}\sum_{s\in S}p_s(Eq)_{ab}^s\ln\left({\sum_{a'\in 
A}\sum_{s'\in 
S}p_{s'}(Eq)_{a'b}^{s'}}\right)\nonumber\\
&=\varphi_B(T_{\Xi}(Eq)),
\end{align}
which is convex because it is the composition of the convex 
function $\varphi_B$, defined in \eqref{e:defvarphi},
with the linear operators
$T{_{\Xi}}$ and $E$
defined in \eqref{e:TXi} and \eqref{e:defE}, respectively.
On the other hand, note that the fixed point recursion in 
\eqref{e:recursXi} is equivalent to 
\begin{equation}
\label{e:mirrorfbXi}
(\forall n\in\NN)\quad q^{n+1}=\arg\min_{q\in 
\R^\Xi}f(q)+{\scal{\nabla
 g_{\Xi}(q^n)}{q}}+D_h(q,q^n),
\end{equation}
where the existence and uniqueness of the 
solution to \eqref{e:mirrorfbXi} are guaranteed by 
\cite[Lemma~2]{BauMOR},  $(q^n)_{n\in\NN}\subset 
\reli(\Delta_\Xi)=\prod_{s\in 
S}\reli(\Delta(\Xi_s))$, and,
for every $p\in\left]0,+\infty\right[^\Xi$ and $q\in[0,+\infty[^\Xi$,
\begin{equation}
\label{e:breghXi}
D_h\colon (q,p)\mapsto h(q)-h(p)-\scal{\nabla 
h(p)}{q-p}=\sum_{s\in S}p_s {D_{\Xi_s}(q^s,p^s)}
\end{equation} 
is the Bregman distance associated to $h$. 
Indeed, given $n\in\NN$, since $g_{\Xi}$ is 
differentiable in 
$]0,+\infty[^{\Xi}$ and, for every $q\in \reli(\Delta_\Xi)$, 
$\nabla 
g_{\Xi}(q)= 
(p_s\ln(q_{ab}^s/(T_{\Xi}(q))_b))_{(a,b)\in\Xi_s,\,s\in S}$, 
\eqref{e:mirrorfbXi} is 
equivalent 
to
{\begin{align}
\quad q^{n+1}&=\arg\min_{q\in V_{\Xi}}
\eta\sum_{s\in S}\sum_{(a,b)\in\Xi_s}d_{ab}^sq_{ab}^s+
\sum_{s\in S}\sum_{(a,b)\in\Xi_s}p_sq_{ab}^s
\ln\left(\frac{q_{ab}^{n,s}}{(T_{\Xi}(q^n))_b}\right)
+\sum_{s\in S}
p_sD_{\Xi_s}(q^s,q^{n,s})\nonumber\\
&=\arg\min_{q\in 
V_{\Xi}}\eta\sum_{s\in S}\sum_{(a,b)\in\Xi_s}d_{ab}^sq_{ab}^s+
\sum_{s\in S}p_s\sum_{(a,b)\in\Xi_s}\phi(q_{ab}^s)-\sum_{s\in 
S}p_s\sum_{(a,b)\in\Xi_s}q_{ab}^{s}\ln\left((T_{\Xi}(q^n))_b\right).
\end{align}}
Hence, it follows from  
{\cite[Theorem~28.2]{rockafellar1997convex}} and 
Proposition~\ref{p:KLprop}\eqref{p:KLpropi} that there exists 
$\xi\in\R^S$ such that
\begin{equation}
(\forall s\in S)(\forall (a,b)\in\Xi_s)\quad 0=\eta 
\frac{d_{ab}^s}{p_s}-\frac{\xi_s}{p_s}+
\ln\left(\frac{q_{ab}^{n+1,s}}{T{_{\Xi}}(q^n)_b}\right) {+1 },
\end{equation}
{which leads to}
\begin{equation}
\label{e:almos}
(\forall s\in S)(\forall (a,b)\in\Xi_s)\quad
q_{ab}^{n+1,s}=T{_{\Xi}}(q^n)_be^{-\eta d_{ab}^s/p_s}
e^{\xi_s/p_s{-1}}
\end{equation}
and,  { using that $q^{n+1}\in V_{\Xi}$, we conclude 
that \eqref{e:almos} reduces to \eqref{e:recursXi}.
In addition, since $h$ is differentiable in $\RPP^{\Xi}$,
it follows from \eqref{e:hminusgconv} that $\nabla 
h=\nabla g_{\Xi}+T_{\Xi}^*\circ\nabla\varphi_B\circ T_{\Xi}$. Hence,
given $n\in\NN$,
we deduce from \eqref{e:hminusgconv} and \eqref{e:Bdist} that
\begin{align}
\label{e:auxd}
g_{\Xi}(q^{n+1})-g_{\Xi}(q^{n})&=
h(q^{n+1})-h(q^n)-\varphi_B(T_{\Xi}(q^{n+1}))+\varphi_B(T_{\Xi}
(q^n))\nonumber\\
&=D_h(q^{n+1},q^n)+\scal{\nabla h(q^n)}{q^{n+1}-q^n}
-\varphi_B(T_{\Xi}(q^{n+1}))+\varphi_B(T_{\Xi}
(q^n))\nonumber\\
&=D_h(q^{n+1},q^n)+\scal{\nabla g_{\Xi}(q^n)}{q^{n+1}-q^n}
-\varphi_B(T_{\Xi}(q^{n+1}))+\varphi_B(T_{\Xi}
(q^n))\nonumber\\
&\hspace{6cm}+\scal{T_{\Xi}^*\nabla 
\varphi_B(T_{\Xi}(q^n))}{q^{n+1}-q^n}\nonumber\\
&=D_h(q^{n+1},q^n)+\scal{\nabla g_{\Xi}(q^n)}{q^{n+1}-q^n}
-\varphi_B(T_{\Xi}(q^{n+1}))+\varphi_B(T_{\Xi}
(q^n))\nonumber\\
&\hspace{5cm}+\scal{\nabla 
\varphi_B(T_{\Xi}(q^n))}{T_{\Xi}(q^{n+1})-T_{\Xi}(q^n)}\nonumber\\
&=D_h(q^{n+1},q^n)+\scal{\nabla g_{\Xi}(q^n)}{q^{n+1}-q^n}
-D_B(T_{\Xi}(q^{n+1}),T_{\Xi}(q^n)).
\end{align}
Moreover, the convexity of $g_{\Xi}$
implies 
$$(\forall q\in\RP^{\Xi})\quad g_{\Xi}(q^n)-g_{\Xi}(q)\le \scal{\nabla 
g_{\Xi}(q^n)}{q^n-q},$$
which, combined with \eqref{e:auxd}, yields
\begin{align}
\label{e:propg}
(\forall q\in\RP^{\Xi})\quad 
g_{\Xi}(q^{n+1})-g_{\Xi}(q)
&\le D_h(q^{n+1},q^n)
-D_B(T_{\Xi}(q^{n+1}),T_{\Xi}(q^n))+\scal{\nabla 
g_{\Xi}(q^n)}{q^{n+1}-q}.
\end{align}
On the other hand, the first order optimality conditions of 
\eqref{e:mirrorfbXi} and the three points identity 
\cite[Lemma~3.1]{chen1993convergence}
imply
\begin{align}
\label{e:auxf}
(\forall q\in\RP^{\Xi})\quad f(q^{n+1})-f(q)
&\le\scal{\nabla 
h(q^n)-\nabla h(q^{n+1})}{q^{n+1}-q}+\scal{\nabla 
g_{\Xi}(q^n)}{q-q^{n+1}}\nonumber\\
&=D_h(q,q^n)-D_h(q,q^{n+1})-D_h(q^{n+1},q^n)+\scal{\nabla 
g_{\Xi}(q^n)}{q-q^{n+1}}.
\end{align} 
Hence, by summing up \eqref{e:auxf} and \eqref{e:propg} we 
deduce
\begin{equation}
\label{e:trueineq}
(\forall q\in\RP^{\Xi})\quad F(q^{n+1},\eta)-F(q,\eta)\le 
D_h(q,q^n)-D_h(q,q^{n+1})-D_B(T_{\Xi}(q^{n+1}),T_{\Xi}(q^n)).
\end{equation}
Note that, by taking $q=q^n$ in \eqref{e:trueineq}, we deduce that 
$(F(q^n,\eta))_{n\in\NN}$ is nonincreasing and, by taking 
$q=\overline{q}$, where $\overline{q}$ is any solution to 
\eqref{e:probXi}, we obtain
 \begin{equation}
\label{e:inequs}
0\le F(q^{n+1},\eta)-F(\overline{q},\eta)\le 
D_h(\overline{q},q^n)-D_h(\overline{q},q^{n+1})
-D_B(T_{\Xi}(q^{n+1}),T_{\Xi}(q^n)).
\end{equation}
This} 
implies that the sequence 
$(D_h(\overline{q},q^n))_{n\in\NN}$
is nonincreasing, {nonnegative} in view of  
Proposition~\ref{p:KLprop}\eqref{p:KLpropii} and, 
therefore, { for every solution $\overline{q}$ to 
\eqref{e:probXi}, we have}
\begin{equation}
\label{e:convergentlamqXi}
\big(D_h(\overline{q},q^n)\big)_{n\in\NN}\quad\text{ 
is convergent}.
\end{equation}
{
Therefore, by summing from $0$ to $n-1$ in \eqref{e:inequs}, we 
deduce
\begin{align}
\label{e:ineq}
0\le n(F(q^n,\eta)-F(\overline{q},\eta))
&\le 
\sum_{k=0}^{n-1}\big(F(q^{k+1},\eta)-F(\overline{q},\eta)\big)
\nonumber\\
&\le D_h(\overline{q},q^0)-D_h(\overline{q},q^{n})
-\sum_{k=0}^{n-1}D_B(T_{\Xi}(q^{k+1}),T_{\Xi}(q^k)),
\end{align}
 which yields the result.

\ref{t:convgeniib}: It is a direct consequence of \ref{t:convgeniia}.

\ref{t:convgeniic}: Note that \ref{t:convgeniib} implies that
$F(q^{n},\eta)\to 
F(\overline{q},\eta)$.}
Finally, since $\Delta_{\Xi}$ is compact and recalling that 
$(q^n)_{n\in\NN}\subset \reli(\Delta_{\Xi})$,
let $q^*$ be an accumulation point of $(q^n)_{n\in\NN}$, say 
$q^{k_n}\to 
q^*$. The continuity of $F(\cdot,\eta)$ relatively to 
$\Delta_{\Xi}$ implies that
$F(q^{k_n},\eta)\to 
F(q^*,\eta)=F(\overline{q},\eta)\in\R$,
 which { implies that $q^*$ is also a solution to 
 \eqref{e:probXi}.  Furthermore, since \eqref{e:breghXi} 
can be written as
\begin{equation}
(\forall n\in\NN)\quad D_h(q^*,q^n)=
\sum_{s\in S}p_s\sum_{(a,b)\in\supp 
q^{*,s}}q^{*,s}_{ab}\ln\left(\frac{q^{*,s}_{ab}}{q^{n,s}_{ab}}\right),
\end{equation}
$q^{k_n}\to q^*$ implies $D_{h}({q}^*,q^{k_n})\to 0$, and } 
we deduce from 
\eqref{e:convergentlamqXi} that $D_{h}({q}^*,q^{n})\to 0$.
We conclude that $q^n\to q^*$ from 
Proposition~\ref{p:KLprop}\eqref{p:KLpropii}.\qed
\end{proof}
{
\begin{remark}
\begin{enumerate}
\item As stated in the proof of Theorem~\ref{t:convgen},
note that the fixed point recursion in 
\eqref{e:recursXi} is equivalent to the Bregman proximal gradient 
algorithm
\begin{equation}
(\forall n\in\NN)\quad q^{n+1}=\arg\min_{q\in 
\R^\Xi}f(q)+\scal{\nabla
 g_{\Xi}(q^n)}{q-q^n}+D_h(q,q^n),
\end{equation}
where $g_{\Xi}$ is the restriction of $g$ 
defined in 
\eqref{e:defg} to $\RP^\Xi$ and $h$ is the Legendre function 
defined in \eqref{e:defh} such that $\inte\dom h=\RPP^{\Xi}$. 

\item In the proof of Theorem~\ref{t:convgen}, we deduce 
from \eqref{e:hminusgconv} that $h-g_{\Xi}$ is convex. This is 
equivalent to the $1$-smoothness relative to $h$ (see, e.g., 
\cite{BauMOR,lu2018}).
\end{enumerate}

\end{remark}
}
\section{Attainability of the minimal cost}
\label{sec:InactiveIC}
In this section we study the Problem~\ref{prob:main}
in the case where the minimal cost is attainable, i.e. when the 
non-linear constraint is inactive. We first set our 
notation. 
In the context of Problem~\ref{prob:main}, we denote by 
\begin{align}
(\forall s\in S)\quad c_{\textsf{min}}^s &= \min_{(a,b) \in A\times B} 
c_{ab}^s\label{eq:Cmin}\\
c_{\textsf{min}} &= \sum_{s\in S}p_s 
c_{\textsf{min}}^s\label{eq:Cmin2}
\end{align}
the minimal costs. The maximal costs 
$(c_{\textsf{max}}^s)_{s\in S}$ 
and $c_{\textsf{max}}$ are 
defined analogously. Throughout this paper we assume
\begin{equation}
\label{e:H0}\tag{$H_0$}
(\forall s\in S) \quad c_{\textsf{min}}^s= 0\quad\text{and}\quad 
c_{\textsf{max}}>c_{\textsf{min}}.
\end{equation}
Note that these assumptions are not restrictive. Indeed, if there 
exists $s\in S$ such that {$c_{\textsf{min}}^s\neq 0$}, 
because of the 
linearity of the objective function, Problem~\ref{prob:main} is 
equivalent to the case when we consider, for every $(a,b,s)\in 
A\times B\times S$, the cost 
$c_{ab}^s-c_{\textsf{min}}^s$ instead of $c_{ab}^s$. Moreover, 
if $c_{\textsf{max}}=c_{\textsf{min}}$ any feasible point is a 
solution. 

We also denote 
\begin{align}
(\forall s\in S)\quad I^s &= \menge{(a,b) \in A\times B}{c_{ab}^s = 
c_{\textsf{min}}^s },\\
(\forall s\in S)(\forall b\in B)\quad I^s_b &= \menge{a\in 
A}{c_{ab}^s = 
c_{\textsf{min}}^s},\\
\Delta_0&=\prod_{s\in S}\Delta(I^s),
\label{e:defDeltamin}
\end{align}
Note that, for all $s\in S$, $I^s\neq \varnothing$, there exists 
$b\in B$ such that
$I^s_b\ne\varnothing$, and $\Delta_0\neq\varnothing$.

The following result helps us to understand the 
case where the information constraint is inactive.
\begin{proposition}
\label{p:achiev}
In the context of Problem~\ref{prob:main}, let 
$\overline{q}\in(\Delta(A\times B))^S$. Then 
\begin{equation}
\overline{q}\in\Delta_0\quad \Leftrightarrow\quad 
\min_{q\in(\Delta(A\times B))^S} 
\sum_{a,b,s}p_sc_{ab}^sq_{ab}^s=\sum_{a,b,s}p_sc_{ab}^s
\overline{q}_{ab}^s=c_{\textsf{min}}.
\end{equation}
\end{proposition}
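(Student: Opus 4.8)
The statement is an equivalence characterizing when the minimal linear cost over the full simplex product $(\Delta(A\times B))^S$ is attained, namely that this happens precisely at the distributions supported on the cost-minimizing index sets $I^s$. The plan is to prove both implications by a direct computation, exploiting the decoupled structure of the problem across $s\in S$ and the linearity of the objective.

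First I would establish the trivial lower bound: for every $q\in(\Delta(A\times B))^S$ and every $s\in S$,
\begin{equation*}
\sum_{a\in A}\sum_{b\in B}c_{ab}^sq_{ab}^s\ge c_{\textsf{min}}^s\sum_{a\in A}\sum_{b\in B}q_{ab}^s=c_{\textsf{min}}^s,
\end{equation*}
since $q^s\in\Delta(A\times B)$ and $c_{ab}^s\ge c_{\textsf{min}}^s$ for all $(a,b)$. Multiplying by $p_s\ge 0$ and summing over $s\in S$ gives $\sum_{a,b,s}p_sc_{ab}^sq_{ab}^s\ge c_{\textsf{min}}$, using \eqref{eq:Cmin2}. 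This already shows $c_{\textsf{min}}$ is a lower bound; the core of the argument is to identify exactly when equality holds in the $s$-th inequality above. Equality in $\sum_{a,b}c_{ab}^sq_{ab}^s=c_{\textsf{min}}^s$ holds iff $q_{ab}^s=0$ whenever $c_{ab}^s>c_{\textsf{min}}^s$, i.e. iff $\supp q^s\subset I^s$, which (since $q^s\in\Delta(A\times B)$) is exactly $q^s\in\Delta(I^s)$.

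For the implication $\Rightarrow$: assume $\overline q\in\Delta_0=\prod_s\Delta(I^s)$. Then for each $s$, $\supp\overline q^s\subset I^s$, so $c_{ab}^s\overline q_{ab}^s=c_{\textsf{min}}^s\overline q_{ab}^s$ for all $(a,b)$, hence $\sum_{a,b}c_{ab}^s\overline q_{ab}^s=c_{\textsf{min}}^s$. Summing with weights $p_s$ yields $\sum_{a,b,s}p_sc_{ab}^s\overline q_{ab}^s=c_{\textsf{min}}$. Combined with the lower bound above, this shows $\overline q$ attains the minimum and the minimal value equals $c_{\textsf{min}}$; moreover $\Delta_0$ is nonempty, so a minimizer exists and the $\min$ in the statement is legitimate. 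For the converse $\Leftarrow$: suppose $\sum_{a,b,s}p_sc_{ab}^s\overline q_{ab}^s=c_{\textsf{min}}$. Since each term $\sum_{a,b}c_{ab}^s\overline q_{ab}^s\ge c_{\textsf{min}}^s$ and $p=(p_s)_{s\in S}\in\reli\Delta(S)$ so that $p_s>0$ for every $s$, equality of the weighted sum forces equality termwise: $\sum_{a,b}c_{ab}^s\overline q_{ab}^s=c_{\textsf{min}}^s$ for every $s\in S$. By the equality-characterization above this gives $\supp\overline q^s\subset I^s$, i.e. $\overline q^s\in\Delta(I^s)$ for all $s$, hence $\overline q\in\Delta_0$.

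The argument is essentially a routine application of the "equality in a sum of nonnegative-gap terms" principle, so there is no serious obstacle; the only point requiring a little care is invoking $p_s>0$ for all $s$ (from $p\in\reli\Delta(S)$) to pass from the aggregated equality to the $s$-by-$s$ equalities in the converse direction — without strict positivity one could have slack in a coordinate with $p_s=0$.
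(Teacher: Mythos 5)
Your proof is correct and follows essentially the same route as the paper: a direct computation using $c_{ab}^s\ge c_{\textsf{min}}^s$, the normalization $\sum_{a,b}q_{ab}^s=1$, and the strict positivity $p_s>0$ from $p\in\reli\Delta(S)$. The only cosmetic difference is that you prove the reverse implication directly through an equality-in-a-sum-of-nonnegative-terms characterization and make the lower bound $c_{\textsf{min}}$ explicit, whereas the paper argues the same point by contraposition.
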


\begin{proof}
Indeed, if $\overline{q}\in\Delta_0$
we have 
\begin{align}
{c_{\textsf{min}}\le }\min_{q\in(\Delta(A\times B))^S} 
\sum_{a,b,s}p_sc_{ab}^sq_{ab}^s 
= \sum_{s\in S}p_s c_{\textsf{min}}^s\!\!\!\!\sum_{(a,b)\in I^s}  
\overline{q}_{ab}^s = \sum_{s\in S}p_s c_{\textsf{min}}^s = 
c_{\textsf{min}}.\label{e:PrimalNoIC}
\end{align}
Conversely, if $\overline{q}\notin\Delta_0$, there exists 
$s\in S$ and $(a,b)\notin I^s$ such that $\overline{q}_{ab}^s>0$.
Since $(a,b)\notin I^s$, we have $c_{ab}^s>c_{\mathsf{min}}^s$ and, 
therefore,
it follows from \eqref{eq:Cmin} that
\begin{equation}
\sum_{a,b,s}p_sc_{ab}^s\overline{q}_{ab}^s>\sum_{s\in S}p_s 
c_{\textsf{min}}^s\sum_{a\in A}\sum_{b\in B}  
\overline{q}_{ab}^s = \sum_{s\in S}p_s c_{\textsf{min}}^s = 
c_{\textsf{min}}
\end{equation}
and the proof is complete.\qed
\end{proof}

From Proposition~\ref{p:achiev} we deduce that, if there exists 
$\widetilde{q}\in 
C\cap\Delta_0$, then $\widetilde{q}$ is a solution to 
Problem~\ref{prob:main} and the value of the problem is 
$c_{\mathsf{min}}$. This 
motivates the following definition.
\begin{definition}
\label{d:achiev}
The minimal cost $c_{\textsf{min}}$ is attainable for 
Problem~\ref{prob:main} if
$C\cap\Delta_0\neq\varnothing$ or,
equivalently, if
\begin{align}
\label{e:achev}\tag{$P_0$}
\min_{q\in \Delta_0}g(q)\le 0.
\end{align}
We denote by $Z^0$ the set 
of solutions to \eqref{e:achev}.
\end{definition}

The next result provides an algorithm for approximating a 
point in $Z^0=\arg\min g(\Delta_0)$, which turns out to be 
nonempty. This is a consequence of Theorem~\ref{t:convgen}.

\begin{proposition}
\label{prop:PF}
$Z^0\neq\varnothing$. { Moreover, given $q^0\in {\rm 
ri}(\Delta_0)$, 
consider the sequence generated 
by the recurrence 
\begin{equation}
\label{e:recurs0}
(\forall n\in\NN)(\forall s\in S)(\forall (a,b)\in I^s)\quad 
q_{ab}^{n+1,s}=
\frac{ 
(T_0(q^n))_b}{\sum\limits_{(a',b')\in 
I^s}(T_0(q^n))_{b'}}, 
\end{equation}
where $T_0\colon\Delta_0\to 
\Delta(B)$ is defined by 
\begin{equation}
\label{e:T0}
T_0\colon 
q\mapsto \left(\sum_{s\in 
S}p_s\sum_{a\in I_b^s}q_{ab}^s\right)_{b\in B}.
\end{equation}
Then, for every $n\in\NN$, $g(q^n)-g(\overline{q})=O(1/n)$ and 
$q^n\to \overline{q}$, for some $\overline{q}\in Z^0$.
}
\end{proposition}
\begin{proof} 
It is a consequence of Theorem~\ref{t:convgen} with $\eta=0$
and, for every $s\in S$, $\Xi_s={I^s}$, noting that 
$\Delta_0=\prod_{s\in S}\Delta({I^s})$.\qed
\end{proof}

Proposition~\ref{prop:PF} provides a convergent algorithm for 
approximating some $\overline{q}\in Z^0$. Then, if 
$g(\overline{q})\le 
0$, then 
$c_{\mathsf{min}}$ is attainable and the solution to 
Problem~\ref{prob:main} is $\overline{q}$.
Otherwise, if $g(\overline{q})> 0$,
$c_{\textsf{min}}$ is not attainable and further techniques 
developed 
{in the next sections} should be used in order to solve 
Problem~\ref{prob:main}.

\section{General case}\label{sec:nonattainablecase}
Let us assume that $c_{\textsf{min}}$ is not attainable and
note that Problem~\ref{prob:main} can be written equivalently 
as
\begin{align}
\label{e:redprobmain} 
&\minimize{q \in (\Delta(A\times B))^S}
{\sum_{s\in S}p_s\sum_{a\in A}\sum_{b\in 
B}c_{ab}^sq_{ab}^s}\nonumber \\
&s.t.\quad g(q)\leq 0,
\end{align}
where $g$ is defined in \eqref{e:defg}.
We define the Lagrangian $\mathcal{L}\colon\R^{(A\times 
 B)^S}\times\RP\to \RX$ by
\begin{equation}
\label{e:defL}
\mathcal{L}\colon
(q,\lambda)\mapsto \sum_{s\in S}p_s\sum_{a\in A}\sum_{b\in 
B}c_{ab}^sq_{ab}^s 
+ \lambda g(q).
\end{equation}
Problem~\ref{prob:main} is equivalent to
\begin{equation}
\label{e:minmax}
\min_{q \in (\Delta(A\times B))^S}\max_{\lambda\ge 
0}\,\mathcal{L}(q,\lambda)
\end{equation}
{and we denote by $v_P\in\R$ the value of this problem.}
\begin{proposition}
\label{p:saddle}
In the context of Problem~\ref{prob:main}, suppose that 
$c_{\mathsf{min}}$ is not attainable.
Then, the following hold:
\begin{enumerate}
\item 
\label{p:saddle0}
For every 
$\lambda\in\RPP$,
the set of 
solutions to 
\begin{equation}
\label{e:minlam2}
\min_{q\in \Delta(A\times B)^S}
\mathcal{L}(q,\lambda),
\end{equation}
denoted by $Z^{\lambda}$, is nonempty.
%
\item 
\label{p:saddleii}
$\overline{q}$ is a solution to 
Problem~\ref{prob:main} if and only if
there exists 
$\overline{\lambda}>0$ such that, $\overline{q}\in 
Z^{\overline{\lambda}}$ and
$g(\overline{q})=0$. In addition, $\overline{\lambda}>0$ is 
unique if there 
exists $s\in S$ such 
that $c_{ab}^s$ is not constant as $(a,b)\in\supp \overline{q}^s$ 
varies.
\item 
\label{p:saddlei}
For every $0<\lambda_1<\lambda_2$, and every $(q_1,q_2)\in 
Z^{\lambda_1}\times Z^{\lambda_2}$, we have
$g(q_1)\ge g(q_2)$.
\end{enumerate}
\end{proposition}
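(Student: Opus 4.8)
The plan is to establish the three items in sequence, relying on convexity/compactness for existence, the KKT theorem with a verified Slater condition for the saddle‑point characterization, and a monotonicity argument exploiting the structure of the Lagrangian for the last part.

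\textbf{Item \ref{p:saddle0}.} For fixed $\lambda>0$, the map $q\mapsto \mathcal{L}(q,\lambda)$ equals a linear functional plus $\lambda g(q)$, hence is proper, lower semicontinuous, and convex by Proposition~\ref{p:exist}\eqref{p:existii+}. Since $(\Delta(A\times B))^S$ is nonempty and compact, existence of a minimizer follows directly from \cite[Proposition~11.15(ii)]{BauschkeCombettes17}. So $Z^\lambda\neq\varnothing$.

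\textbf{Item \ref{p:saddleii}.} Here I would invoke convex duality for \eqref{e:redprobmain}. First I verify the Slater condition: by Proposition~\ref{p:exist}\eqref{p:existi} there is $\widetilde q\in\inte((\Delta(A\times B))^S)$ with $g(\widetilde q)<0$, and the objective and $g$ are convex on the convex compact feasible polytope. Hence strong duality holds and a KKT multiplier exists: $\overline q$ solves Problem~\ref{prob:main} iff there is $\overline\lambda\ge 0$ with $\overline q\in Z^{\overline\lambda}$, $g(\overline q)\le 0$, and $\overline\lambda\, g(\overline q)=0$ (complementary slackness), i.e.\ $(\overline q,\overline\lambda)$ is a saddle point of $\mathcal{L}$ on $(\Delta(A\times B))^S\times\RP$. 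It remains to upgrade ``$\overline\lambda\ge 0$'' to ``$\overline\lambda>0$'' and ``$g(\overline q)\le 0$'' to ``$g(\overline q)=0$''. If $\overline\lambda=0$ then $\overline q\in Z^0$ minimizes the pure linear cost over $(\Delta(A\times B))^S$, so by Proposition~\ref{p:achiev} $\overline q\in\Delta_0$ and the objective value is $c_{\mathsf{min}}$; but $\overline q$ must also be feasible, $g(\overline q)\le 0$, which would mean $C\cap\Delta_0\neq\varnothing$, contradicting the hypothesis that $c_{\mathsf{min}}$ is not attainable. Hence $\overline\lambda>0$, and then complementary slackness forces $g(\overline q)=0$. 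Conversely, if $\overline\lambda>0$, $\overline q\in Z^{\overline\lambda}$, and $g(\overline q)=0$, then $\overline q$ is feasible, complementary slackness holds trivially, and the KKT sufficiency (valid since the problem is convex) gives that $\overline q$ solves Problem~\ref{prob:main}.

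\textbf{Item \ref{p:saddlei}.} This is the monotonicity of the constraint value along the ``solution path''. Take $0<\lambda_1<\lambda_2$ and $q_i\in Z^{\lambda_i}$. Write $\Phi(q):=\sum_{s}p_s\sum_{a,b}c_{ab}^s q_{ab}^s$ for the linear objective, so $\mathcal{L}(q,\lambda)=\Phi(q)+\lambda g(q)$. Optimality of $q_1$ for $\lambda_1$ and of $q_2$ for $\lambda_2$ gives
\begin{align}
\Phi(q_1)+\lambda_1 g(q_1)&\le \Phi(q_2)+\lambda_1 g(q_2),\label{e:mon1}\\
\Phi(q_2)+\lambda_2 g(q_2)&\le \Phi(q_1)+\lambda_2 g(q_1).\label{e:mon2}
\end{align}
Adding \eqref{e:mon1} and \eqref{e:mon2} and cancelling the $\Phi$ terms yields $\lambda_1 g(q_1)+\lambda_2 g(q_2)\le \lambda_1 g(q_2)+\lambda_2 g(q_1)$, i.e.\ $(\lambda_2-\lambda_1)\big(g(q_1)-g(q_2)\big)\ge 0$, and since $\lambda_2-\lambda_1>0$ we conclude $g(q_1)\ge g(q_2)$.

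The only genuinely delicate point is the strong‑duality/KKT step in item \ref{p:saddleii}: I must make sure the hypotheses of the convex multiplier rule are all in place — feasible set a convex compact polytope (Proposition~\ref{p:exist}\eqref{p:existii}), convex lower semicontinuous objective and $g$ (Proposition~\ref{p:exist}\eqref{p:existii+}), and a Slater point in the interior with strictly negative $g$ (Proposition~\ref{p:exist}\eqref{p:existi}) — so that the existence of a finite nonnegative multiplier is guaranteed and the saddle‑point equivalence is two‑sided; everything else is a short computation.
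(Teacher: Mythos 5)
Your proposal is correct and follows essentially the same route as the paper: existence via lower semicontinuity, convexity, and compactness of the simplex product; the KKT/multiplier rule of Hiriart-Urruty--Lemar\'echal with the Slater point from Proposition~\ref{p:exist}\eqref{p:existi}, ruling out $\overline{\lambda}=0$ by the same contradiction with non-attainability via Proposition~\ref{p:achiev}; and the identical two-inequality monotonicity argument for the last item. The only cosmetic difference is that the paper derives item~\eqref{p:saddle0} as an instance of Theorem~\ref{t:convgen}\eqref{t:convgeni} rather than citing compactness directly, but the underlying argument is the same.
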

\begin{proof}
\ref{p:saddle0}: It is a consequence of 
Theorem~\ref{t:convgen}\eqref{t:convgeni} with $\Xi^s\equiv 
A\times B$, $\eta=1/\lambda$, and, for every $(a,b,s)\in A\times 
B\times S$, $d_{ab}^s=p_sc_{ab}^s$.

\ref{p:saddleii}:
Note that, for every $\lambda\ge 0$, \eqref{e:defL} and 
Proposition~\ref{p:exist}\eqref{p:existii+} imply that  
$\mathcal{L} 
(\cdot,\lambda)$ is lower semicontinuous and convex. Hence, it 
follows from 
Proposition~\ref{p:exist}\eqref{p:existi}
and 
{\cite[Theorem~28.2]{rockafellar1997convex}}  that 
$\overline{q}$ is a 
solution to 
Problem~\ref{prob:main} if and only if
there {exists $\overline{\lambda}\ge0$} such that 
\begin{equation}
\label{e:optcondmainproo}
\mathcal{L} 
(\overline{q},\overline{\lambda})=\min_{q \in 
(\Delta(A\times 
B))^{ S}}\mathcal{L} (q,\overline{\lambda}),\quad 
g(\overline{q})\le0,\quad 
\text{ and }\quad \overline{\lambda}g(\overline{q})=0.
\end{equation}
Now, if 
$\overline{\lambda}=0$,  we obtain from 
\eqref{e:optcondmainproo} 
and
\eqref{e:defL} that
\begin{equation}
\mathcal{L}(\overline{q},0)=\sum_{s\in S}p_s\sum_{a\in 
A}\sum_{b\in 
B}c_{ab}^s\overline{q}_{ab}^s =\min_{q\in 
(\Delta(A\times B))^S}
\sum_{s\in S}p_s\sum_{a\in A}\sum_{b\in 
B}c_{ab}^sq_{ab}^s =c_{\mathsf{min}},\label{e:CminL0}
\end{equation}
and we deduce from Proposition~\ref{p:achiev} and 
\eqref{e:optcondmainproo} that 
$\overline{q}\in 
C\cap \Delta_0$, which contradicts the fact that
$c_{\mathsf{min}}$ is not attainable.
Therefore, $\overline{\lambda}>0$ and it follows from 
\eqref{e:optcondmainproo} that $g(\overline{q})=0$.
Now, fix $\overline{q}$ to be an arbitrary solution to 
Problem~\ref{prob:main}. To prove the uniqueness of 
$\overline{\lambda}>0$, denote by $\Delta_{\Sigma}:=\prod_{s\in 
S}\Delta(\Sigma_s)$, where,
for every $s\in S$, 
$\Sigma_s=\supp\overline{q}^s$ and note that 
$\mathcal{L}_{\Sigma}:=\mathcal{L}(\cdot,\overline{\lambda})|_{\Delta_{\Sigma}}$
 is 
convex and
differentiable in $]0,+\infty[^{\Xi}$.
Hence, we deduce from  
\cite[Theorem~28.2]{rockafellar1997convex}
that \eqref{e:optcondmainproo} implies
\begin{equation}
\label{e:uniq1}
(\forall s\in S)(\forall (a,b)\in\Sigma_s)\quad 0=\frac{\partial 
\mathcal{L}_{\Sigma}}{\partial q_{ab}^s}(\overline{q})=p_sc_{ab}^s+
\overline{\lambda}\frac{\partial 
g}{\partial q_{ab}^s}(\overline{q})+\overline{\eta}^s,
\end{equation}
for some $(\overline{\eta}^s)_{s\in S}\in \R^S$.
Hence, our hypothesis implies that there exists $\sigma\in 
S$ such that $\frac{\partial 
g}{\partial q_{ab}^{\sigma}}(\overline{q})$ is not constant 
as $(a, b) \in\Sigma_{\sigma}$ varies, 
which implies 
that 
$(\frac{\partial 
g}{\partial q_{ab}^{\sigma}}(\overline{q}))_{(a,b)\in\Sigma_{\sigma}}$
 and 
the 
vector of ones 
$\boldsymbol{1}\in \R^{\Sigma_{\sigma}}$ are linearly independent.
Thus, if $\overline{\lambda}_1>0$ and 
$\overline{\lambda}_2>0$
are two multipliers satisfying \eqref{e:optcondmainproo} and, in 
turn, $\overline{\eta}^{\sigma}_1\in\R$ and 
$\overline{\eta}^{\sigma}_2\in\R$ are such that 
\begin{equation}
\label{e:uniq2}
(\forall i\in\{1,2\})(\forall (a,b)\in\Sigma_{\sigma})\quad 
0=p_{\sigma}c_{ab}^{\sigma}+
\overline{\lambda}_i\frac{\partial 
g}{\partial q_{ab}^{\sigma}}(\overline{q})+\overline{\eta}^{\sigma}_i,
\end{equation}
we deduce 
\begin{equation}
\label{e:uniq3}
(\forall (a,b)\in\Sigma_{\sigma})\quad 
0=
(\overline{\lambda}_1-\overline{\lambda}_2)\frac{\partial 
g}{\partial q_{ab}^{\sigma}}
(\overline{q})+(\overline{\eta}^{\sigma}_1
-\overline{\eta}^{\sigma}_2),
\end{equation}
and the linear independence implies 
$\overline{\lambda}_1=\overline{\lambda}_2$. We proved that 
given any solution $\overline{q}$ to Problem~\ref{prob:main}, the 
multiplier $\overline{\lambda}>0$ satisfying 
\eqref{e:optcondmainproo} is unique. Since
\begin{multline}
v_P=\sum_{s\in S}p_s\sum_{a\in A}\sum_{b\in 
B}c_{ab}^s\overline{q}_{ab}^s
=\mathcal{L}(\overline{q},\overline{\lambda})=
\min_{q \in 
(\Delta(A\times 
B))^S}\mathcal{L} (q,\overline{\lambda})\\
\le 
\max_{\lambda\ge 0}\min_{q \in 
(\Delta(A\times 
B))^S}\mathcal{L} (q,\lambda)\le 
\min_{q \in 
(\Delta(A\times 
B))^S}\max_{\lambda\ge 0}\mathcal{L} (q,\lambda)=v_P,
\end{multline}
we deduce that $\overline{\lambda}$ is the unique solution to the 
dual 
problem
\begin{equation}
\label{e:dual}\tag{$D$}
\max_{\lambda\ge 0}\min_{q \in 
(\Delta(A\times 
B))^S}\mathcal{L} (q,\lambda).
\end{equation}
Noting that \eqref{e:dual} is independent of the primal solution 
$\overline{q}$, the proof is complete.

\ref{p:saddlei}: 
Let $0<\lambda_1<\lambda_2$ and let
$(q_1,q_2)\in 
Z^{\lambda_1}\times Z^{\lambda_2}$. We have
\begin{equation}
\mathcal{L}(q_1,\lambda_1)=\sum_{a\in A}\sum_{b\in B}\sum_{s\in 
S}p_sc_{ab}^sq_{1,ab}^s+\lambda_1 g(q_1)\le
\sum_{a\in A}\sum_{b\in B}\sum_{s\in 
S}p_sc_{ab}^sq_{2,ab}^s+\lambda_1 g(q_2), 
\end{equation} 
which implies
\begin{equation}
\sum_{a\in A}\sum_{b\in B}\sum_{s\in 
S}p_sc_{ab}^s(q_{1,ab}^s-q_{2,ab}^s)\le 
\lambda_1(g(q_2)-g(q_1)).
\end{equation}
Analogously,
\begin{equation}
\sum_{a\in A}\sum_{b\in B}\sum_{s\in 
S}p_sc_{ab}^s(q_{2,ab}^s-q_{1,ab}^s)\le \lambda_2(g(q_1)-g(q_2))
\end{equation}
and combining both inequalities we obtain
\begin{equation}
\label{e:auxine}
\lambda_2(g(q_2)-g(q_1))\le \sum_{a\in A}\sum_{b\in B}\sum_{s\in 
S}p_sc_{ab}^s(q_{1,ab}^s-q_{2,ab}^s)\le 
\lambda_1(g(q_2)-g(q_1)).
\end{equation}
We deduce $(\lambda_2-\lambda_1)(g(q_2)-g(q_1))\le 0$, which 
implies {the result}.\qed
\end{proof}
In view of Proposition~\ref{p:saddle}\eqref{p:saddleii}, in order to 
find a solution to Problem~\ref{prob:main} we need to find 
$\overline{q}\in Z^{\lambda}$ for a specific value of $\lambda>0$ 
such that $g(\overline{q})=0$. {In this context, we} use an 
algorithm whose 
convergence is a consequence of Theorem~\ref{t:convgen}.
\begin{theorem}
\label{t:propPFqlam}
Let $\lambda>0${. Given $q^0\in\reli (\Delta(A\times B)^S)$, 
consider the sequence generated 
by the recurrence 
\begin{equation}
\label{e:defPFqlam}
(\forall n\in\NN)(\forall s\in S)(\forall (a,b)\in A\times B)\quad 
q_{ab}^{n+1,s}=
\displaystyle{\frac{(T(q^n))_be^{-c_{ab}^s/\lambda}}{
\displaystyle{\sum_{a'\in
A}}\displaystyle{\sum_{b'\in 
B}}(T(q^n))_{b'}e^{-c_{a'b'}^s/\lambda}}},
\end{equation}
where $T\colon\Delta(A\times B)^S\to 
\Delta(B)$ is defined by 
\begin{equation}
\label{e:T}
T\colon 
q\mapsto \left(\sum_{s\in 
S}p_s\sum_{a\in A}q_{ab}^s\right)_{b\in B}.
\end{equation}
Then, for every $n\in\NN$, 
$\mathcal{L}(q^n,\lambda)-\mathcal{L}(\overline{q},\lambda)=O(1/n)$ 
and 
$q^n\to \overline{q}$, for some $\overline{q}\in Z^{\lambda}$.
}

\end{theorem}
\begin{proof} 
Note that, from \eqref{e:defL} and \eqref{e:defg},
\eqref{e:minlam2} is equivalent to \eqref{e:probXi} with 
$\Xi=(A\times B)^S$, $\eta=1/\lambda$, and, for every 
$(a,b,s)\in A\times B\times S$, $d_{ab}^s=p_sc_{ab}^s$.
Since in \eqref{e:probXi} we have 
$F=\mathcal{L}(\cdot,\lambda)/\lambda$, the result is a
consequence of Theorem~\ref{t:convgen}. 
\qed
\end{proof}

\begin{remark}
Note that, in the particular case when, for every $(a,b,s)\in 
{A\times 
B\times S}$, $c_{ab}^s=c_b^s$, we recover the Blahut-Arimoto 
algorithm as a particular case of the recurrence in 
Theorem~\ref{t:propPFqlam}, for a fixed $\lambda>0$.
We also obtain a $O(1/n)$ {convergence} rate in 
Lagrangian values {and the uniqueness of the multiplier in 
view of Proposition~\ref{p:saddle}\eqref{p:saddleii}.}
\end{remark}

\section{Algorithm and numerical results}

In the context of Problem \ref{prob:main}, we use a bisection 
algorithm in order to estimate the optimal Lagrange multiplier 
$\overline{\lambda}>0$. 
We first compute an upper 
bound {in the following result}.

\begin{lemma}\label{lemma:LambdaMax}
Suppose that $c_{\mathsf{min}}$ is not attainable, set
\begin{align}
\lambda_{\mathsf{max}} = \frac{c_{\mathsf{max}} - 
c_{\mathsf{min}}}{\ln |A|}>0,
\end{align} 
and let $\overline{q}_{\mathsf{max}}\in 
Z^{\lambda_{\mathsf{max}}}$ be the limit of the sequence 
$\{q^n\}_{n\in\NN}$ generated by \eqref{e:defPFqlam} with 
$\lambda=\lambda_{\mathsf{max}}$, 
guaranteed by Theorem~\ref{t:propPFqlam}.
Then{,} for every $\overline{q}_{0}\in Z^{0}$, we have
\begin{align}
g(\overline{q}_{\mathsf{max}})<0\quad \text{and}\quad 
g(\overline{q}_{0})>0.
\end{align}
\end{lemma}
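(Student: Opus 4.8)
The plan is to treat the two inequalities separately. The inequality $g(\overline{q}_0)>0$ is essentially a restatement of the hypothesis: since $\overline{q}_0\in Z^0$ it attains the optimal value of $(P_0)$, i.e.\ $g(\overline{q}_0)=\min_{q\in\Delta_0}g(q)$, and by Definition~\ref{d:achiev} the assumption that $c_{\mathsf{min}}$ is not attainable says precisely that this value is \emph{not} $\le 0$; hence $g(\overline{q}_0)>0$.

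For $g(\overline{q}_{\mathsf{max}})<0$, the idea is to evaluate the Lagrangian $\mathcal{L}(\cdot,\lambda_{\mathsf{max}})$ at the uniform point $\widetilde{q}\in(\Delta(A\times B))^S$ given by $\widetilde{q}_{ab}^s=1/(|A||B|)$, for which $g(\widetilde{q})=-\ln|A|$ (as computed in the proof of Proposition~\ref{p:exist}), and to invoke the optimality of $\overline{q}_{\mathsf{max}}$. Since $\lambda_{\mathsf{max}}\ln|A|=c_{\mathsf{max}}-c_{\mathsf{min}}$ and $\overline{q}_{\mathsf{max}}\in Z^{\lambda_{\mathsf{max}}}$ minimizes $\mathcal{L}(\cdot,\lambda_{\mathsf{max}})$ over $(\Delta(A\times B))^S$, which contains $\widetilde{q}$, I would write
\[
\sum_{a,b,s}p_sc_{ab}^s(\overline{q}_{\mathsf{max}})_{ab}^s+\lambda_{\mathsf{max}}\,g(\overline{q}_{\mathsf{max}})=\mathcal{L}(\overline{q}_{\mathsf{max}},\lambda_{\mathsf{max}})\le\mathcal{L}(\widetilde{q},\lambda_{\mathsf{max}})=\sum_{a,b,s}p_sc_{ab}^s\widetilde{q}_{ab}^s-(c_{\mathsf{max}}-c_{\mathsf{min}})
\]
(in particular the value is finite, so $g(\overline{q}_{\mathsf{max}})\in\R$). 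Bounding the cost term on the left below by $c_{\mathsf{min}}$ via Proposition~\ref{p:achiev} gives $\lambda_{\mathsf{max}}\,g(\overline{q}_{\mathsf{max}})\le\sum_{a,b,s}p_sc_{ab}^s\widetilde{q}_{ab}^s-c_{\mathsf{max}}$, so, as $\lambda_{\mathsf{max}}>0$, it then remains only to show that the cost of the uniform distribution is \emph{strictly} less than $c_{\mathsf{max}}$.

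That strict inequality is the single place where $(H_0)$ is genuinely used, and it is the step I expect to be most delicate. For each $s\in S$, the number $\tfrac{1}{|A||B|}\sum_{a,b}c_{ab}^s$ is an average of the values $\{c_{ab}^s\}_{a,b}$, hence $\le c_{\mathsf{max}}^s$, with equality only if $c_{ab}^s=c_{\mathsf{max}}^s$ for every $(a,b)$; but since $c_{\mathsf{min}}^s=0$ is attained, equality would force $c_{\mathsf{max}}^s=0$. If equality held for every $s$, then $c_{\mathsf{max}}=\sum_sp_sc_{\mathsf{max}}^s=0=c_{\mathsf{min}}$, contradicting $(H_0)$. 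Hence the inequality is strict for at least one $s$, and since every $p_s>0$ (because $p\in\reli\Delta(S)$) it follows that $\sum_{a,b,s}p_sc_{ab}^s\widetilde{q}_{ab}^s=\sum_sp_s\tfrac{1}{|A||B|}\sum_{a,b}c_{ab}^s<\sum_sp_sc_{\mathsf{max}}^s=c_{\mathsf{max}}$. Plugging this into the previous bound yields $\lambda_{\mathsf{max}}\,g(\overline{q}_{\mathsf{max}})<0$, i.e.\ $g(\overline{q}_{\mathsf{max}})<0$. I do not anticipate any serious obstacle beyond this: the whole argument is an elementary estimate needing no convexity, the only subtlety being to keep track of how strictness propagates from $(H_0)$.
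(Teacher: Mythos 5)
Your proof is correct, and the second half takes a genuinely different route from the paper. For $g(\overline{q}_{0})>0$ you argue exactly as the paper does: it is an immediate unwinding of Definition~\ref{d:achiev}. For $g(\overline{q}_{\mathsf{max}})<0$, however, the paper substitutes the explicit fixed-point formula \eqref{eq:InitialFPlamQ} for $\overline{q}_{\mathsf{max}}$ into the definition of $g$ and bounds the resulting expression term by term using $c_{\mathsf{min}}^s\le c_{ab}^s\le c_{\mathsf{max}}^s$ and $\sum_{b'}\overline{t}_{b',\mathsf{max}}=1$, arriving at $(c_{\mathsf{max}}-c_{\mathsf{min}})/\lambda_{\mathsf{max}}-\ln|A|=0$. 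You instead use only the variational fact $\mathcal{L}(\overline{q}_{\mathsf{max}},\lambda_{\mathsf{max}})\le\mathcal{L}(\widetilde{q},\lambda_{\mathsf{max}})$ against the uniform point, for which $g(\widetilde{q})=-\ln|A|$ is already computed in Proposition~\ref{p:exist}\eqref{p:existi}, and then bound the cost term below by $c_{\mathsf{min}}$. Your approach buys two things: it applies to \emph{every} element of $Z^{\lambda_{\mathsf{max}}}$ (not just fixed points of $\Theta^{\lambda_{\mathsf{max}}}$), and it yields the explicit quantitative bound $\lambda_{\mathsf{max}}\,g(\overline{q}_{\mathsf{max}})\le\frac{1}{|A||B|}\sum_{a,b,s}p_sc_{ab}^s-c_{\mathsf{max}}$; the paper's computation, by contrast, exhibits more directly how the exponential structure of the minimizer interacts with $g$. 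Your handling of the strictness — tracing it back to $(H_0)$ together with $p\in\reli\Delta(S)$ via the observation that the uniform average of the costs can equal $c_{\mathsf{max}}^s$ only if all costs in slice $s$ coincide, which $c_{\mathsf{min}}^s=0$ then forces to vanish — is careful and correct, and is in fact spelled out more completely than the corresponding strict-inequality step in the paper.
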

\begin{proof}
First note that assumption $(H_0)$ implies that there 
exists $(a,b,s)\in 
A\times B\times S$ such that $c_{ab}^s>c_{\mathsf{min}}$ and, 
hence, $\lambda_{\mathsf{max}} > 0$.
It follows from Proposition~\ref{prop:PF} and 
{Proposition~\ref{p:saddle}\eqref{p:saddle0}} that 
$Z^0\neq \varnothing$ and $Z^{\lambda_{\mathsf{max}}}\neq 
\varnothing$, respectively. 
  Now let 
$\overline{q}_{\mathsf{max}}\in Z^{\lambda_{\mathsf{max}}}$.
We denote 
$\overline{t}_{\mathsf{max}} = T(\overline{q}_{\mathsf{max}})$. By 
using \eqref{e:defPFqlam}, we show that
\begin{align*}
g(\overline{q}_{\mathsf{max}})  
&= \sum_s p_s \sum_{(a,b)\in \supp 
\overline{q}_{\mathsf{max}}^s}\overline{q}_{ab,\mathsf{max}}^s  \ln 
\frac{\overline{q}_{ab,\mathsf{max}}^s}{\overline{t}_{b,\mathsf{max}}}\\
&= \sum_s p_s \sum_{(a,b)\in \supp 
\overline{q}_{\mathsf{max}}^s}\overline{q}_{ab,\mathsf{max}}^s \ln 
\frac{\overline{t}_{b,\mathsf{max}} 
e^{-c_{ab}^s/{\lambda_{\mathsf{max}}} 
}}{\overline{t}_{b,\mathsf{max}}\sum_{a'b'}\overline{t}_{b',\mathsf{max}} e^{- c_{a'b'}^s/{\lambda_{\mathsf{max}}} } } \label{}\\
&=- \sum_s p_s \sum_{(a,b)\in \supp 
\overline{q}_{\mathsf{max}}^s}\overline{q}_{ab,\mathsf{max}}^s  
c_{ab}^s/{\lambda_{\mathsf{max}}}   -  \sum_s p_s \sum_{(a,b)\in 
\supp \overline{q}_{\mathsf{max}}^s}\overline{q}_{ab,\mathsf{max}}^s 
\ln  \sum_{a'b'}\overline{t}_{b',\mathsf{max}} e^{- 
c_{a'b'}^s/{\lambda_{\mathsf{max}}} } \\
&<- \sum_s p_s \sum_{(a,b)\in \supp 
\overline{q}_{\mathsf{max}}^s}\overline{q}_{ab,\mathsf{max}}^s  
c^s_{\mathsf{min}}/{\lambda_{\mathsf{max}}}   -  \sum_s p_s 
\sum_{(a,b)\in \supp 
\overline{q}_{\mathsf{max}}^s}\overline{q}_{ab,\mathsf{max}}^s \ln  
\sum_{a'b'}\overline{t}_{b',\mathsf{max}} e^{- 
c^s_{\mathsf{max}}/{\lambda_{\mathsf{max}}} } \\
&=- \frac{c_{\mathsf{min}}}{\lambda_{\mathsf{max}}}    -  \sum_s p_s 
\sum_{(a,b)\in \supp 
\overline{q}_{\mathsf{max}}^s}\overline{q}_{ab,\mathsf{max}}^s \ln  
e^{- c^s_{\mathsf{max}}/{\lambda_{\mathsf{max}}} }  - \ln |A|\\
&=  \frac{c_{\mathsf{max}} - 
c_{\mathsf{min}}}{\lambda_{\mathsf{max}}}   - \ln |A|\\
&=0.
\end{align*}
Finally, for every $\overline{q}_{0}\in 
Z^{0}$, the assumption that $c_{\mathsf{min}}$ is not attainable 
yields 
$g(\overline{q}_{0})>0$. \qed
\end{proof}

We describe our algorithm for solving Problem \ref{prob:main}.

\begin{algorithm}\label{Heuristic1}
Let ${\lambda}^+_0 = \lambda_{\mathsf{max}}$, ${\lambda}^-_0 = 
0$, and $q_{0}\in\reli (\Delta(A \times B))^S$. We consider 
$0<\varepsilon_f<\varepsilon_b$ and the 
sequences generated by the recurrence
\begin{align}
\label{e:algotseng}
&\begin{array}{l}
\text{for }k=0,1,2,\ldots\\
\left\lfloor
\begin{array}{l}
\text{While } \lambda^+_{k} - \lambda^-_{k} \geq \varepsilon_b\\
\lambda_{k} = \frac12 ({\lambda}^+_{k} + {\lambda}^-_{k})\\
\begin{array}{l}
\text{for }n=0,1,2,\ldots\\
\left\lfloor
\begin{array}{l}
\text{For }b\in B \\
\left\lfloor 
\begin{array}{ll}
{t_{b,n} =\sum\limits_{s\in S}\sum\limits_{a\in 
A}p_sq_{ab,n}^{s}}\\[3pt]
\end{array}
\right. \\
{\text{For }(a,b,s)\in A\times B\times S} \\
\left\lfloor 
\begin{array}{ll}
q_{ab,n+1}^s = \dfrac{t_{b,n} e^{-c_{ab}^s/\lambda_k 
}}{\sum\limits_{a'\in A}\sum\limits_{b'\in B}t_{b'\!,n} 
e^{- c_{a'b'}^s/\lambda_k } }\\[3pt]
\end{array}
\right.\\[5pt]
\text{STOP when } \|q_{n+1} - q_{n}\|_2 < \varepsilon_f
\end{array}
\right.\\
\end{array}\\
q_{k} = q_{n+1}\\
\text{If } \qquad \quad g(q_{k})<-0_{\mathsf{n}},\quad  \text{ then } {\lambda}^+_{k+1} = \lambda_k \text{ and }{\lambda}^-_{k+1} = {\lambda}^-_{k},\\
\text{Else if } \quad g(q_{k})>0_{\mathsf{n}},\quad \text{ then } {\lambda}^+_{k+1} = {\lambda}^+_{k} \text{ and }{\lambda}^-_{k+1} = \lambda_k,\\ 
\text{Else if } \quad g(q_{k})\in[-0_{\mathsf{n}},0_{\mathsf{n}}], \quad \text{ then } \lambda^-_{k+1}=\lambda^+_{k+1}= \lambda_k,\\
k = k+1,\\
\end{array}
\right.
\end{array}
\end{align}
where $0_{\mathsf{n}}>0$.
\end{algorithm}
In Algorithm~\ref{Heuristic1}, $0_{\mathsf{n}}$ depends on the 
precision of the computer and we set it as $10^{-20}$.

\begin{proposition}
\label{e:convlam}
Suppose that $c_{\mathsf{min}}$ is not attainable, consider the 
sequences $(\lambda_k)_{k\in\NN}$ and $(q_k)_{k\in\NN}$ 
generated by Algorithm~\ref{Heuristic1} {assuming exact 
solutions in the inner iterations} and set
\begin{equation}
\label{e:defk0}
k_0=\left[\log_2 \left(\frac{c_{\mathsf{max}} - 
c_{\mathsf{min}}}{\varepsilon_b \ln |A|}\right)\right]+1.
\end{equation}
 Then, for every $k\ge k_0$, 
$$|\lambda_k-\overline{\lambda}|<\varepsilon_b,$$
where $\overline{\lambda}>0$ is such that $\overline{q}\in 
Z^{\overline{\lambda}}$ is a solution to Problem~\ref{prob:main} 
satisfying $g(\overline{q})=0$, in view of
Proposition~\ref{p:saddle}\eqref{p:saddleii}.
\end{proposition}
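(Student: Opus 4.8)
The plan is to show that the bisection on $\lambda$ converges geometrically to $\overline\lambda$, using the monotonicity of $\lambda\mapsto g(q_\lambda)$ established in Proposition~\ref{p:saddle}\eqref{p:saddlei} together with the sign information at the endpoints given by Lemma~\ref{lemma:LambdaMax}. First I would verify the loop invariant: for every $k\in\NN$ the interval $[\lambda_k^-,\lambda_k^+]$ contains $\overline\lambda$, and $\lambda_k^+-\lambda_k^-=\lambda_{\mathsf{max}}/2^{k}$. The second fact is immediate from the update $\lambda_k=\tfrac12(\lambda_k^++\lambda_k^-)$ and the fact that exactly one of the two endpoints is replaced by $\lambda_k$ at each step (unless the algorithm terminates with $\lambda_k^-=\lambda_k^+$, in which case the bound is trivial). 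For the first fact I would argue by induction: the base case $k=0$ holds because $\lambda_0^-=0$, $\lambda_0^+=\lambda_{\mathsf{max}}$, and $0<\overline\lambda$ while, by Lemma~\ref{lemma:LambdaMax} and Proposition~\ref{p:saddle}\eqref{p:saddlei}, $g(\overline q)=0<g(\overline q_0)$ forces $\overline\lambda<\lambda_{\mathsf{max}}$ (since $g$ along the solution branch is nonincreasing and already negative at $\lambda_{\mathsf{max}}$). For the inductive step, the branching in \eqref{e:algotseng} uses the sign of $g(q_k)$, where $q_k$ approximates a point of $Z^{\lambda_k}$: if $g(q_k)<0$ then by Proposition~\ref{p:saddle}\eqref{p:saddlei} we have $g(q_\mu)\ge g(q_k)$ is not quite it—rather, $g(q_\mu)\le g(q_k)<0$ for $\mu>\lambda_k$ and $g(q_\mu)\ge 0$ can only hold for $\mu\le\lambda_k$, so $\overline\lambda\le\lambda_k$ and we correctly set $\lambda_{k+1}^+=\lambda_k$; symmetrically if $g(q_k)>0$ then $\overline\lambda\ge\lambda_k$; and if $g(q_k)\approx 0$ we have essentially located $\overline\lambda$ and collapse the interval.

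Once the invariant is in hand, the conclusion is arithmetic: since $\overline\lambda\in[\lambda_k^-,\lambda_k^+]$ and $\lambda_k\in[\lambda_k^-,\lambda_k^+]$, we get $|\lambda_k-\overline\lambda|\le\lambda_k^+-\lambda_k^-=\lambda_{\mathsf{max}}/2^{k}=\frac{c_{\mathsf{max}}-c_{\mathsf{min}}}{2^{k}\ln|A|}$. Requiring this to be $<\varepsilon_b$ gives $2^{k}>\frac{c_{\mathsf{max}}-c_{\mathsf{min}}}{\varepsilon_b\ln|A|}$, i.e. $k>\log_2\!\big(\frac{c_{\mathsf{max}}-c_{\mathsf{min}}}{\varepsilon_b\ln|A|}\big)$, which is guaranteed for $k\ge k_0$ with $k_0$ as in \eqref{e:defk0}. (Strictly, $|\lambda_k-\overline\lambda|$ could equal $\lambda_k^+-\lambda_k^-$ only at an endpoint, and since $\lambda_k$ is the midpoint one in fact has $|\lambda_k-\overline\lambda|\le\tfrac12(\lambda_k^+-\lambda_k^-)$, giving an even better bound, but the stated estimate suffices.)

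The main obstacle is the inexactness of the inner loop: Algorithm~\ref{Heuristic1} does not compute an exact element of $Z^{\lambda_k}$ but stops the Bregman iteration \eqref{e:defPFqlam} when $\|q_{n+1}-q_n\|_2<\varepsilon_f$, so strictly speaking the sign test is applied to an approximate point $q_k$ rather than to $g(\overline q_{\lambda_k})$. To make the induction rigorous one should either (i) treat $q_k$ as the exact limit $\overline q_{\lambda_k}\in Z^{\lambda_k}$—which is the idealized version this proposition addresses, with $\varepsilon_f$ controlling only the practical accuracy—or (ii) observe that, because $g$ is continuous on the compact set $(\Delta(A\times B))^S$ and $\lambda\mapsto g(\overline q_\lambda)$ is strictly monotone near $\overline\lambda$ (so $\overline\lambda$ is an isolated zero of this map), for $\varepsilon_f$ small enough the sign of $g(q_k)$ agrees with the sign of $g(\overline q_{\lambda_k})$ whenever $\lambda_k$ is bounded away from $\overline\lambda$, and when $\lambda_k$ is within $\varepsilon_b$ of $\overline\lambda$ the conclusion already holds. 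I would present the argument in form (i), matching the wording of the statement, and remark that the stopping tolerance $\varepsilon_f<\varepsilon_b$ together with the convergence rate \eqref{e:o1L} quantifies the perturbation. Everything else—the monotonicity, the endpoint signs, the halving—has already been established, so the proof is short.
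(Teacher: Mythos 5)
Your proposal is correct and takes essentially the same route as the paper: the bisection invariant $\lambda_k^+-\lambda_k^-=\lambda_{\mathsf{max}}/2^k$ together with the endpoint sign conditions from Lemma~\ref{lemma:LambdaMax}, the monotonicity of Proposition~\ref{p:saddle}\eqref{p:saddlei} to keep $\overline{\lambda}$ trapped in $[\lambda_k^-,\lambda_k^+]$, and the final arithmetic giving $k_0$. Your closing remark on the inexact inner loop addresses a point the paper itself idealizes (it identifies $q_k$ with the exact limit $Q(\lambda_k)$ of the subroutine), so presenting the argument in your form (i) matches the paper's treatment.
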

\begin{proof}
The existence of $\overline{\lambda}$ follows from 
Proposition~\ref{p:saddle}\eqref{p:saddleii}. Moreover, we deduce 
from Lemma~\ref{lemma:LambdaMax} and 
Algorithm~\ref{Heuristic1} that 
\begin{equation}
\label{e:bound}
(\forall k\in\NN{\setminus\{0\}})\quad \lambda^+_{k} - 
\lambda^-_{k} \; {\leq} \;\frac{\lambda_{\mathsf{max}}}{2^k}
\end{equation}
and $g(Q(\lambda^+_{k}))<0$ and 
$g(Q(\lambda^-_{k}))>0$, where $Q(\lambda)$ denotes the unique 
limit of the subroutine in 
Algorithm~\ref{Heuristic1} when $\lambda_k=\lambda$.
Since Proposition~\ref{p:saddle}\eqref{p:saddlei} implies 
that 
$|\lambda_k-\overline{\lambda}|\le \lambda^+_{k} - 
\lambda^-_{k}$ and we have
$$(\forall k\in\NN)\quad 
\frac{\lambda_{\mathsf{max}}}{2^k}<\varepsilon_b\quad 
\Leftrightarrow\quad k> 
\log_2\left(\frac{\lambda_{\mathsf{max}}}{\varepsilon_b}\right),$$
 the result follows from \eqref{e:bound} and  
the definition of $\lambda_{\mathsf{max}}$ in 
Lemma~\ref{lemma:LambdaMax}.\qed
\end{proof}

\begin{remark}
Note that, when the cost satisfies {$c_{ab}^s = c_b^s \in 
\mathbb{R}$ for all $(a, b, s) \in A \times B \times S$,}  
Algorithm~\ref{Heuristic1} encompasses the Blahut-Arimoto 
procedure \cite{Arimoto72,Blahut72} combined with a 
bisection 
method. In this setting, Proposition~\ref{e:convlam} provides
{an estimation on the number of iterations to achieve a tolerance 
$\varepsilon_b$}.
\end{remark}

Proposition~\ref{e:convlam} asserts that we can approximate the 
unique
Lagrange multiplier $\overline{\lambda}$ with a tolerance 
$\varepsilon_b>0$ with $\lambda_{k_0}$ for $k_0$ defined in 
\eqref{e:defk0}. Then, $q_{k_0}$ is an approximation of a solution 
to 
Problem~\ref{prob:main}, in view of Proposition~\ref{p:saddle}.
{In the next section,} we provide some numerical experiments 
illustrating 
the advantages of the proposed approach.

\subsection{Numerical results}\label{sec:numerical}

In this section we investigate the performances of the proposed 
Algorithm~\ref{Heuristic1}. We consider as benchmark the 
fmincon-interior point function in MATLAB.

\subsubsection{Coordination game in 
{\cite[Example~2.1]{GossnerHernandezNeyman06}}}\label{sec:GHN}

We consider the cost function of the main example in 
{\cite[Example~2.1]{GossnerHernandezNeyman06}}, where 
$S=\{s_0,s_1\}$, 
$p_{s_0}=p_{s_1}=1/2$, and the cost function is given by the table 
below.
\begin{center}
\begin{tikzpicture}[xscale=1.1,yscale=0.9]
\draw [thick] (0,0) grid (2,2);
\draw (0.5,1.5) node{$0$};
\draw (1.5,1.5) node{$1$};
\draw (0.5,0.5) node{$1$};
\draw (1.5,0.5) node{$1$};
\draw (0,1.5) node[left]{$a_0$};
\draw (0,0.5) node[left]{$a_1$};
\draw (0.5,2) node[above]{$b_0$};
\draw (1.5,2) node[above]{$b_1$};
\draw (1,-0.2) node[below]{$s_0$};
\draw [thick] (4,0) grid (6,2);
\draw (4.5,1.5) node{$1$};
\draw (5.5,1.5) node{$1$};
\draw (4.5,0.5) node{$1$};
\draw (5.5,0.5) node{$0$};
\draw (4,1.5) node[left]{$a_0$};
\draw (4,0.5) node[left]{$a_1$};
\draw (4.5,2) node[above]{$b_0$};
\draw (5.5,2) node[above]{$b_1$};
\draw (5,-0.2) node[below]{$s_1$};
\end{tikzpicture}
\end{center}
Since $I^{s_0} = \{(a_0,b_0)\}$ and $I^{s_1} = \{(a_1,b_1)\}$ are 
singleton, for any distribution $q\in \Delta_0$, we have
$(T_0(q))_{b_0}=(T_0(q))_{b_1}=1/2$ and $g(q) =  
p_{s_0} 
\psi(q_{a_{0}b_{0}}^{s_0}, (T_0(q))_{b_0}) + p_{s_1} 
\psi(q_{a_{1}b_{1}}^{s_1}, (T_0(q))_{b_1}) = \frac12 \ln2 + \frac12 
\ln2 = \ln 2>0$, which implies that $c_{\textsf{min}}$ is not 
attainable.  We set $q_0\in {\rm ri}(\Delta(A\times B))^S$ as the 
uniform distribution, we consider {$\varepsilon_b=10^{-10}$} as 
the precision 
for the bisection algorithm in Algorithm~\ref{Heuristic1}
and for {the function fmincon}, and 
{$\varepsilon_f=10^{-12}$} as the precision for the fixed point 
procedure in \eqref{e:defPFqlam} and for the fmincon 
constraint.

Figure~\ref{fig:Fig_ErrorGHNa} shows that 
Algorithm~\ref{Heuristic1} and 
{the function fmincon} converge to the optimal 
solution, which 
is given by the distribution $\overline{q}$ defined by
{
\begin{center}
\begin{tikzpicture}[xscale=1.1,yscale=0.9]
\draw [thick] (0,0) grid (2,2);
\draw (0.5,1.5) node{$\gamma$};
\draw (1.5,1.5) node{$\frac{1-\gamma}{3}$};
\draw (0.5,0.5) node{$\frac{1-\gamma}{3}$};
\draw (1.5,0.5) node{$\frac{1-\gamma}{3}$};
\draw (0,1.5) node[left]{$a_0$};
\draw (0,0.5) node[left]{$a_1$};
\draw (0.5,2) node[above]{$b_0$};
\draw (1.5,2) node[above]{$b_1$};
\draw (1,-0.2) node[below]{$s_0$};
\draw [thick] (4,0) grid (6,2);
\draw (4.5,1.5) node{$\frac{1-\gamma}{3}$};
\draw (5.5,1.5) node{$\frac{1-\gamma}{3}$};
\draw (4.5,0.5) node{$\frac{1-\gamma}{3}$};
\draw (5.5,0.5) node{$\gamma$};
\draw (4,1.5) node[left]{$a_0$};
\draw (4,0.5) node[left]{$a_1$};
\draw (4.5,2) node[above]{$b_0$};
\draw (5.5,2) node[above]{$b_1$};
\draw (5,-0.2) node[below]{$s_1$};
\end{tikzpicture}
\end{center}
}
where $\gamma$ is the unique solution of 
{$-\gamma\ln\gamma-(1-\gamma)\ln(1-\gamma) + 
(1-\gamma) \ln 3 = \ln 2$, which is $\gamma \simeq 0.81071$, see \cite[Section~2.1]{GossnerHernandezNeyman06}.}

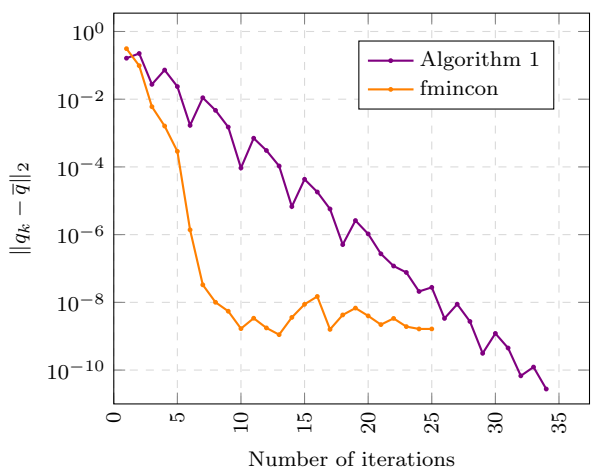
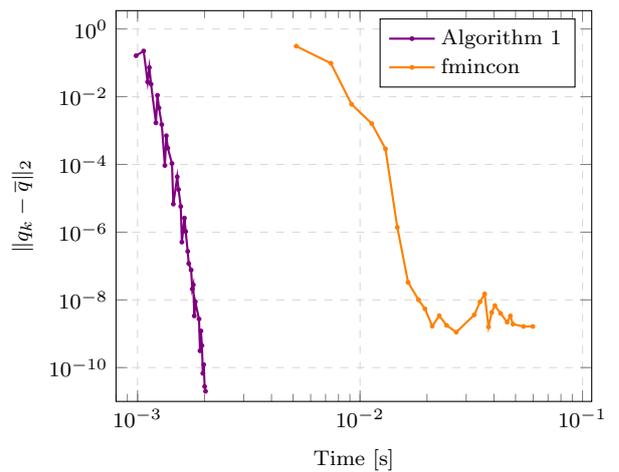
\begin{figure}[h!]
\centering
\begin{subfigure}{0.49\textwidth}
\begin{tikzpicture}
\begin{semilogyaxis}[
          width=1*\linewidth, 
          grid=major, 
          grid style={dashed,gray!30}, 
          xlabel=Number of iterations, 
          ylabel=$\|q_{k} - \overline{q}\|_2$,
		 xmin=2e-3,
		 ymin=1e-11,
          legend cell align=left,
          legend style={at={(0.72,0.93)},anchor=north}, 
          x tick label style={rotate=90,anchor=east} 
        ]
\addplot[thick, color=violet] plot[mark=*, mark size = 0.5pt] file 
{Fig_ErrorIterationGHN_Dichotomy5.txt};
\addplot[thick, color=orange] plot[mark=*, mark size = 0.5pt] file 
{Fig_ErrorIterationGHN_Fmincon5.txt};
\legend{Algorithm~\ref{Heuristic1},fmincon}
\end{semilogyaxis}
\end{tikzpicture}
\caption{Error term $(\|q_{k} - \overline{q}\|_2)_{k\in\NN}$ vs. number of iteration.}
\label{fig:Fig_ErrorGHNa}
\end{subfigure}
\hfill
\begin{subfigure}{0.49\textwidth}
\begin{tikzpicture}
\begin{loglogaxis}[
          width=1*\linewidth, 
          grid=major, 
          grid style={dashed,gray!30}, 
          xlabel=Time {[s]}, 
          ylabel=$\|q_{k} - \overline{q}\|_2$,
		 xmin=8e-4,
		 ymin=1e-11,
         xmax=1.1e-1,
          legend cell align=left,
          legend style={at={(0.76,0.98)},anchor=north}, 
          x tick label style={rotate=00,anchor=north} 
        ]
\addplot[thick, color=violet] plot[mark=*, mark size = 0.5pt] file 
{Fig_ErrorGHN_Dichotomy5.txt};
\addplot[thick, color=orange] plot[mark=*, mark size = 0.5pt] file 
{Fig_ErrorTimeGHN_Fmincon5.txt};
\legend{Algorithm~\ref{Heuristic1},fmincon}
\end{loglogaxis}
\end{tikzpicture}
\caption{Error term $(\|q_{k} - \overline{q}\|_2)_{k\in\NN}$ vs. computing time.}
\label{fig:Fig_ErrorGHNb}
\end{subfigure}
\caption{Comparison of Algorithm~\ref{Heuristic1} and {the 
function fmincon} in MATLAB.}\label{fig:Comparison}
\end{figure}

\begin{table}[!h] 
\begin{center}
\caption{Results obtained for {the function fmincon} and for 
Algorithm~\ref{Heuristic1}.}
\label{Table1}
\begin{tabular}{|c|c|c|c|c|c|}
\hline
&Value&$\lambda$&Iterations&Time [s]&Precision $\|q_k-\overline{q}\|_2$ \\
\hline
Algorithm~\ref{Heuristic1} 	& $0.18929$		&  	$0.39166$	&$34$ &	 
$0.00202$ 	& 	$1.9987^{-11}$\\
\hline
fmincon 		&	$0.18929$	&		$0.39166$	&	$25$ & $0.05857$		&  
$1.6389^{-10} $\\
\hline
\end{tabular}
\end{center}
\end{table}

The outputs of the algorithms are presented in Table~\ref{Table1} 
below. Algorithm~\ref{Heuristic1} is more efficient than {the 
function fmincon} in MATLAB.

\subsubsection{Extended coordination game}

We consider an extended version of the cost function of the 
example studied in the previous 
Section~\ref{sec:GHN}, with a larger dimension $d = |A| = |B| = |S| 
>2$. We set $p_s\equiv 1/d$ and the costs in 
Fig.~\ref{fig:ExtendedGNH}. In this case, for every
$s\in S$, the set $I^{s}$ is a singleton, {thus the distribution $q\in 
\Delta_0$ induces $T_0(q) $ which is 
uniform. Therefore $g(q)  = \ln d>0$ and thus $c_{\textsf{min}}$ is not attainable.} We compare the performances of Algorithm~\ref{Heuristic1} with 
{the function fmincon} in MATLAB.

\begin{figure}[!ht]
\begin{center}
\begin{tikzpicture}[xscale=0.7,yscale=0.6]
\draw [thick] (0,0) rectangle (5,5);
\draw [thick] (0,4) -- (5,4);
\draw [thick] (0,3) -- (5,3);
\draw [thick] (0,1) -- (5,1);
\draw [thick] (1,0) -- (1,5);
\draw [thick] (2,0) -- (2,5);
\draw [thick] (4,0) -- (4,5);
\draw (0.5,4.5) node{$0$};
\draw (1.5,4.5) node{$1$};
\draw (0.5,3.5) node{$1$};
\draw (1.5,3.5) node{$1$};
\draw (1.5,0.5) node{$1$};
\draw (0.5,0.5) node{$1$};
\draw (4.5,0.5) node{$1$};
\draw (4.5,3.5) node{$1$};
\draw (4.5,4.5) node{$1$};
\draw (0.5,2) node{$\vdots$};
\draw (1.5,2) node{$\vdots$};
\draw (4.5,2) node{$\vdots$};
\draw (3,4.5) node{$\ldots$};
\draw (3,3.5) node{$\ldots$};
\draw (3,0.5) node{$\ldots$};
\draw (3,2) node{$\ddots$};
\draw (0,4.5) node[left]{$a_1$};
\draw (0,3.5) node[left]{$a_2$};
\draw (0,0.5) node[left]{$a_d$};
\draw (0.5,5) node[above]{$b_1$};
\draw (1.5,5) node[above]{$b_2$};
\draw (4.5,5) node[above]{$b_d$};
\draw (2.5,-0.2) node[below]{$s_1$};
\draw [thick] (7,0) rectangle (12,5);
\draw [thick] (7,4) -- (12,4);
\draw [thick] (7,3) -- (12,3);
\draw [thick] (7,1) -- (12,1);
\draw [thick] (8,0) -- (8,5);
\draw [thick] (9,0) -- (9,5);
\draw [thick] (11,0) -- (11,5);
\draw (7.5,4.5) node{$1$};
\draw (8.5,4.5) node{$1$};
\draw (7.5,3.5) node{$1$};
\draw (8.5,3.5) node{$0$};
\draw (8.5,0.5) node{$1$};
\draw (7.5,0.5) node{$1$};
\draw (11.5,0.5) node{$1$};
\draw (11.5,3.5) node{$1$};
\draw (11.5,4.5) node{$1$};
\draw (7.5,2) node{$\vdots$};
\draw (8.5,2) node{$\vdots$};
\draw (11.5,2) node{$\vdots$};
\draw (10,4.5) node{$\ldots$};
\draw (10,3.5) node{$\ldots$};
\draw (10,0.5) node{$\ldots$};
\draw (10,2) node{$\ddots$};
\draw (7,4.5) node[left]{$a_1$};
\draw (7,3.5) node[left]{$a_2$};
\draw (7,0.5) node[left]{$a_d$};
\draw (7.5,5) node[above]{$b_1$};
\draw (8.5,5) node[above]{$b_2$};
\draw (11.5,5) node[above]{$b_d$};
\draw (9.5,-0.2) node[below]{$s_2$};
\draw (13,2.2) node{$\ldots$};
\draw [thick] (14,0) rectangle (19,5);
\draw [thick] (14,4) -- (19,4);
\draw [thick] (14,3) -- (19,3);
\draw [thick] (14,1) -- (19,1);
\draw [thick] (15,0) -- (15,5);
\draw [thick] (16,0) -- (16,5);
\draw [thick] (18,0) -- (18,5);
\draw (14.5,4.5) node{$1$};
\draw (15.5,4.5) node{$1$};
\draw (14.5,3.5) node{$1$};
\draw (15.5,3.5) node{$1$};
\draw (15.5,0.5) node{$1$};
\draw (14.5,0.5) node{$1$};
\draw (18.5,0.5) node{$0$};
\draw (18.5,3.5) node{$1$};
\draw (18.5,4.5) node{$1$};
\draw (14.5,2) node{$\vdots$};
\draw (15.5,2) node{$\vdots$};
\draw (18.5,2) node{$\vdots$};
\draw (17,4.5) node{$\ldots$};
\draw (17,3.5) node{$\ldots$};
\draw (17,0.5) node{$\ldots$};
\draw (17,2) node{$\ddots$};
\draw (14,4.5) node[left]{$a_1$};
\draw (14,3.5) node[left]{$a_2$};
\draw (14,0.5) node[left]{$a_d$};
\draw (14.5,5) node[above]{$b_1$};
\draw (15.5,5) node[above]{$b_2$};
\draw (18.5,5) node[above]{$b_d$};
\draw (16.5,-0.2) node[below]{$s_d$};
\end{tikzpicture}
\end{center}
\caption{Extended version of the cost function of \cite{GossnerHernandezNeyman06}.}\label{fig:ExtendedGNH}
\end{figure}

\begin{figure}[h!]
\begin{center}
\begin{tikzpicture}
\begin{semilogyaxis}[
          width=0.5*\linewidth, 
          grid=major, 
          grid style={dashed,gray!30}, 
          xlabel=Number of dimensions $d\in{[2,18]}$, 
          ylabel=Times {[s]},
          legend cell align=left,
          legend style={at={(0.25,0.97)},anchor=north}, 
          x tick label style={rotate=90,anchor=east} 
        ]
\addplot[thick, color=violet] plot[mark=*, mark size = 0.5pt] file 
{Fig_ExtendedGHN_Dichotomy_OK.txt};
\addplot[thick, color=orange] plot[mark=+, mark size = 1pt] file 
{Fig_ExtendedGHN_Fmincon_OK.txt};
\legend{Algorithm~\ref{Heuristic1},fmincon}
\end{semilogyaxis}
\end{tikzpicture}
\caption{Comparison of the performances of 
Algorithm~\ref{Heuristic1} and of {the function 
fmincon}.}\label{fig:ExtendedGHN}
\end{center}
\end{figure}
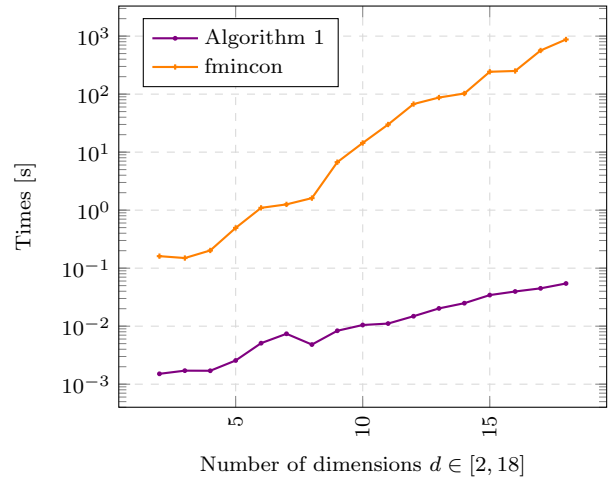

\begin{table}[!h] 
\begin{center}
\caption{Efficiency of Algorithm~\ref{Heuristic1} compared to {the 
function fmincon}.}
\label{Table4}
\pgfplotstabletypeset[
column type=c,
every head row/.style={
before row={
\hline
& \multicolumn{1}{c|}{Algorithm~\ref{Heuristic1}} & \multicolumn{2}{c|}{fmincon}\\},
},
    columns/0/.style={column name={Dimension: $d$}},
    columns/1/.style={column name={Time {[s]} }},
    columns/2/.style={column name={Time {[s]}}},
    columns/3/.style={column name={Iterations}},
    header=false,
    string type,
    before row=\hline,
     every last row/.style={after row=\hline},
    column type/.add={|}{},
    every last column/.style={column type/.add={}{|}}]
    {Table_ExtendedGHN2_OK.txt}%
\end{center}
\end{table}

For Algorithm~\ref{Heuristic1}, we set the uniform distribution 
$q_0\equiv (|A||B|)^{-1}\in\reli(\Delta(A\times B))^S$. Because of 
the symmetry of the 
cost 
function in Figure~\ref{fig:ExtendedGNH}, only $2$ iterations are 
required in the fixed-point recurrence \eqref{e:defPFqlam}. 
Note that Proposition~\ref{e:convlam} asserts that the number of 
iterations of 
Algorithm~\ref{Heuristic1} 
decreases with $|A|$. Indeed, by considering 
$\varepsilon_b=10^{-10}$
and $\varepsilon_f=10^{-11}$, \eqref{e:defk0} leads to
$34$ 
iterations when $|A|\in\{2,3\}$, $33$ iterations when 
$|A|\in\{4,\ldots,10\}$, and $32$ iterations when 
$|A|\in\{11,\ldots,18\}$. These numbers are very close to the 
number of iterations to achieve the tolerance $\varepsilon_b$, 
numerically, even though the error $\varepsilon_f$ is not 
considered in Proposition~\ref{e:convlam}. 

The outputs of the algorithms are presented in 
Fig.~\ref{fig:ExtendedGHN} and in Table~\ref{Table4}. For all the 
considered dimensions $d\in\NN$, our proposed 
Algorithm~\ref{Heuristic1} is far more efficient than {the function 
fmincon} in MATLAB.


\subsubsection{Additional examples in higher
dimensions}\label{sec:dimension10}

In this section, we increase the dimension of the cost functions. 
For each of the values for $(A,B,S)$ in Table~\ref{Table10_a}, we 
draw at random $10$ cost functions where the cost values are 
taken uniformly in $[0,10]$ with one decimal 
value. We consider the precision $\varepsilon_b=10^{-6}$ for the 
bisection method and $\varepsilon_f=10^{-8}$ for the fixed point 
procedure in \eqref{e:defPFqlam}. 
We randomly generate $10$ cost functions such that 
$c_{\mathsf{min}}$ is not 
attainable. In Table~\ref{Table10_a}, we provide the average 
computational time of Algorithm~\ref{Heuristic1} to achieve the 
tolerance $\varepsilon_b=10^{-6}$ together with the average 
computational cost of the subroutine \eqref{e:defPFqlam}.
When the dimension of the 
problem is  $32 \,000$, 
Table~\ref{Table10_a} and 
Fig.~\ref{fig:Dichotomy_20_40_40_Exp-6_a} show that 
Algorithm~\ref{Heuristic1} converges approximately within $6$ 
minutes on average.

\begin{table}[!h] 
\begin{center}
\caption{Average convergence time of  Algorithm~\ref{Heuristic1} for $10$ randomly selected costs functions.}
\label{Table10_a}
\pgfplotstabletypeset[
column type=c,
every head row/.style={
before row={
\hline
&& \multicolumn{1}{c|}{Algorithm~\ref{Heuristic1}} & \multicolumn{2}{c|}{Fixed Point \eqref{e:defPFqlam}}\\},
},
    columns/0/.style={column name={($A,B,S$)}},
    columns/1/.style={column name={Dimension}},
    columns/2/.style={column name={Av. time {[s]}}},
    columns/3/.style={column name={Av. time {[s]}}},
    columns/4/.style={column name={Av. iterations}},
    header=false,
    string type,
    before row=\hline,
     every last row/.style={after row=\hline},
    column type/.add={|}{},
    every last column/.style={column type/.add={}{|}}]
    {Table_AverageDichotomy_20_40_40_Exp-6.txt}%
\end{center}
\end{table}
We observe that the computational time increases drastically when 
the dimension of $S$ is increased. Moreover, for a same 
{problem dimension}, the computational time is more 
expensive when the dimension of $B$ is larger. For every 
dimension, the computational time and number of iterations for the 
fixed point procedure is tractable.


\begin{figure}[h!]
\centering
\begin{tikzpicture}
\begin{loglogaxis}[
          width=0.83*\linewidth, 
          height=7cm,
          grid=major, 
          grid style={dashed,gray!30}, 
          xlabel=Time {[s]}, 
          ylabel=$\|q_{k+1} - q_{k}\|_2$,
		 xmin=10e-3,
         legend cell align=left,
         legend style={at={(1.07,0.65)},anchor=south}, 
          x tick label style={rotate=90,anchor=east} 
        ]
\addplot[thick, color=cyan] plot[mark=*, mark size = 0.8pt] file 
{Fig_Dichotomy_5_10_10_Exp-6_ErrorQ_1.txt};
\addplot[thick, color=orange] plot[mark=*,  mark size = 0.8pt] file 
{Fig_Dichotomy_10_20_20_Exp-6_ErrorQ_1.txt};
\addplot[thick, color=violet] plot[mark=*,  mark size = 0.8pt] file 
{Fig_Dichotomy_20_40_40_Exp-6_ErrorQ_1.txt};
\addplot[thick, color=cyan] plot[mark=*, mark size = 0.8pt] file 
{Fig_Dichotomy_5_10_10_Exp-6_ErrorQ_2.txt};
\addplot[thick, color=orange] plot[mark=*,  mark size = 0.8pt] file 
{Fig_Dichotomy_10_20_20_Exp-6_ErrorQ_2.txt};
\addplot[thick, color=violet] plot[mark=*,  mark size = 0.8pt] file 
{Fig_Dichotomy_20_40_40_Exp-6_ErrorQ_2.txt};
\addplot[thick, color=cyan] plot[mark=*, mark size = 0.8pt] file 
{Fig_Dichotomy_5_10_10_Exp-6_ErrorQ_3.txt};
\addplot[thick, color=orange] plot[mark=*,  mark size = 0.8pt] file 
{Fig_Dichotomy_10_20_20_Exp-6_ErrorQ_3.txt};
\addplot[thick, color=violet] plot[mark=*,  mark size = 0.8pt] file 
{Fig_Dichotomy_20_40_40_Exp-6_ErrorQ_3.txt};
%
\addplot[thick, color=cyan] plot[mark=*, mark size = 0.8pt] file 
{Fig_Dichotomy_5_10_10_Exp-6_ErrorQ_4.txt};
\addplot[thick, color=orange] plot[mark=*,  mark size = 0.8pt] file 
{Fig_Dichotomy_10_20_20_Exp-6_ErrorQ_4.txt};
\addplot[thick, color=violet] plot[mark=*,  mark size = 0.8pt] file 
{Fig_Dichotomy_20_40_40_Exp-6_ErrorQ_4.txt};
\addplot[thick, color=cyan] plot[mark=*, mark size = 0.8pt] file 
{Fig_Dichotomy_5_10_10_Exp-6_ErrorQ_5.txt};
\addplot[thick, color=orange] plot[mark=*,  mark size = 0.8pt] file 
{Fig_Dichotomy_10_20_20_Exp-6_ErrorQ_5.txt};
\addplot[thick, color=violet] plot[mark=*,  mark size = 0.8pt] file 
{Fig_Dichotomy_20_40_40_Exp-6_ErrorQ_5.txt};
\addplot[thick, color=cyan] plot[mark=*, mark size = 0.8pt] file 
{Fig_Dichotomy_5_10_10_Exp-6_ErrorQ_6.txt};
\addplot[thick, color=orange] plot[mark=*,  mark size = 0.8pt] file 
{Fig_Dichotomy_10_20_20_Exp-6_ErrorQ_6.txt};
\addplot[thick, color=violet] plot[mark=*,  mark size = 0.8pt] file 
{Fig_Dichotomy_20_40_40_Exp-6_ErrorQ_6.txt};
%
\addplot[thick, color=cyan] plot[mark=*,  mark size = 0.8pt]file 
{Fig_Dichotomy_5_10_10_Exp-6_ErrorQ_7.txt};
\addplot[thick, color=orange] plot[mark=*,  mark size = 0.8pt] file 
{Fig_Dichotomy_10_20_20_Exp-6_ErrorQ_7.txt};
\addplot[thick, color=violet] plot[mark=*,  mark size = 0.8pt] file 
{Fig_Dichotomy_20_40_40_Exp-6_ErrorQ_7.txt};
\addplot[thick, color=cyan] plot[mark=*, mark size = 0.8pt] file 
{Fig_Dichotomy_5_10_10_Exp-6_ErrorQ_8.txt};
\addplot[thick, color=orange] plot[mark=*,  mark size = 0.8pt] file 
{Fig_Dichotomy_10_20_20_Exp-6_ErrorQ_8.txt};
\addplot[thick, color=violet] plot[mark=*,  mark size = 0.8pt] file 
{Fig_Dichotomy_20_40_40_Exp-6_ErrorQ_8.txt};
\addplot[thick, color=cyan] plot[mark=*, mark size = 0.8pt] file 
{Fig_Dichotomy_5_10_10_Exp-6_ErrorQ_9.txt};
\addplot[thick, color=orange] plot[mark=*,  mark size = 0.8pt] file 
{Fig_Dichotomy_10_20_20_Exp-6_ErrorQ_9.txt};
\addplot[thick, color=violet] plot[mark=*,  mark size = 0.8pt] file 
{Fig_Dichotomy_20_40_40_Exp-6_ErrorQ_9.txt};
\addplot[thick, color=cyan] plot[mark=*, mark size = 0.8pt] file 
{Fig_Dichotomy_5_10_10_Exp-6_ErrorQ_10.txt};
\addplot[thick, color=orange] plot[mark=*,  mark size = 0.8pt] file 
{Fig_Dichotomy_10_20_20_Exp-6_ErrorQ_10.txt};
\addplot[thick, color=violet] plot[mark=*,  mark size = 0.8pt] file 
{Fig_Dichotomy_20_40_40_Exp-6_ErrorQ_10.txt};
%
%
\legend{{$(A,B,S)=(5,10,10)$},{$(A,B,S)=(10,20,20)$},{$(A,B,S)=(20,40,40)$}}
\end{loglogaxis}
\end{tikzpicture}
\caption{Convergence time of  Algorithm~\ref{Heuristic1} for $10$ 
randomly selected {cost} functions.}
\label{fig:Dichotomy_20_40_40_Exp-6_a}
\end{figure}
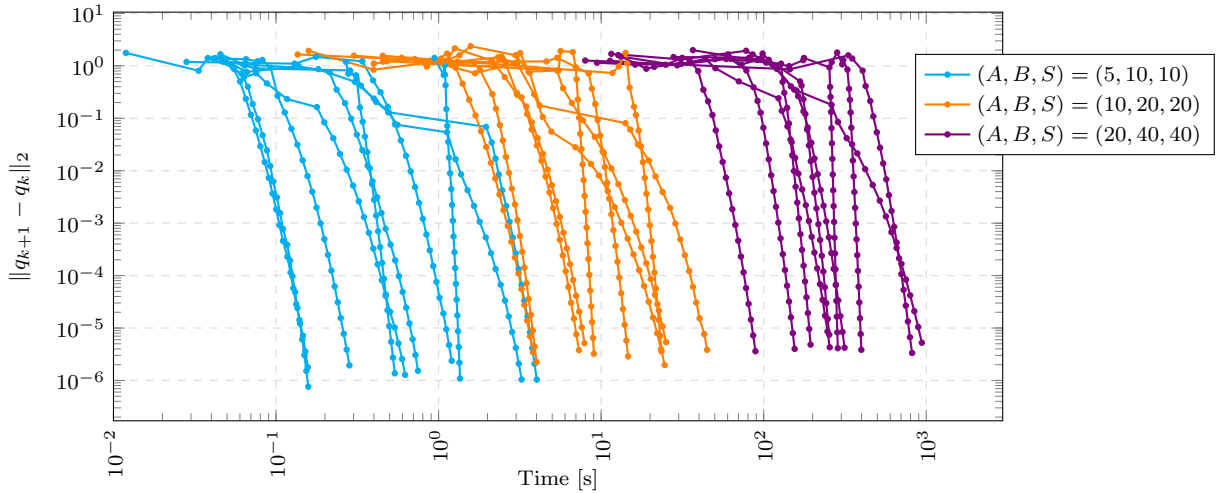

\section{Conclusions}
We proposed a Bregman proximal gradient method for linear 
optimization problems with an entropic constraint, covering both 
active and inactive constraint cases. When the constraint is 
inactive, a fixed-point iteration solves the problem efficiently. In the 
active case, we combined the method with a bisection scheme to 
estimate the Lagrange multiplier, recovering the Blahut-Arimoto 
algorithm as a special case.

The method achieves an $O(1/n)$ {convergence} rate and 
outperforms standard solvers in numerical experiments, including 
high-dimensional settings. This highlights its relevance for 
information-theoretic and game-theoretic applications.


{\small
\textbf{Acknowledgments.}
This work was supported by the National 
Agence of Research and Development (ANID) from Chile, under 
the grants FONDECYT 1230257, MATH-AmSud 23-MATH-17, 
and Centro de Modelamiento Matem\'{a}tico (CMM) BASAL fund 
FB210005 for centers of excellence.
This work was funded by the French government under the France 2030 ANR program ``PEPR Networks of the Future'' (ref. ANR-22-PEFT-0010). The authors gratefully acknowledge financial support from the iVisit program of IRISA UMR 6074.}





\end{document}